\newtheorem{theorem}{Theorem}[section]
\newtheorem{proposition}[theorem]{Proposition}
\newtheorem{lemma}[theorem]{Lemma}
\newtheorem{corollary}[theorem]{Corollary}
\theoremstyle{definition}
\newtheorem{definition}[theorem]{Definition}
\theoremstyle{remark}
\newcommand{\C}{\mathbb{C}}
\newcommand{\K}{\mathbb{K}}
\newcommand{\N}{\mathbb{N}}
\newcommand{\R}{\mathbb{R}}
\newcommand{\Z}{\mathbb{Z}}
\newcommand{\cA}{\mathcal{A}}
\newcommand{\cB}{\mathcal{B}}
\newcommand{\cM}{\mathcal{M}}
\newcommand{\cU}{\mathcal{U}}
\newcommand{\tv}{{\widetilde{v}}}
\newcommand{\tw}{{\widetilde{w}}}
\newcommand{\tsigma}{{\widetilde{\sigma}}}
\newcommand{\trho}{{\widetilde{\rho}}}
\newcommand{\T}{\mathbb{T}}
\newcommand{\vartau}{{\overline{\tau}}}
\newcommand{\varr}{{\overline{r}}}
\newcommand{\Zjs}{\mathcal{Z}}
\DeclareMathOperator{\id}{id}
\DeclareMathOperator{\Aut}{Aut}
\DeclareMathOperator{\Ad}{Ad}
\DeclareMathOperator{\dist}{dist}
\DeclareMathOperator{\tr}{tr}
\begin{document}
\title{Decomposition rank of UHF-absorbing $\mathrm{C}^*$-algebras}
\author{Hiroki Matui \\
Graduate School of Science \\
Chiba University \\
Inage-ku, Chiba 263-8522, Japan 
\and 
Yasuhiko Sato \\
Graduate School of Science \\
Kyoto University \\
Sakyo-ku, Kyoto 606-8502, Japan}
\date{}

\maketitle

\begin{abstract}   
Let $A$ be a unital separable simple $\mathrm{C}^*$-algebra 
with a unique tracial state. 
We prove that if $A$ is nuclear and quasidiagonal, 
then $A$ tensored with the universal UHF-algebra has 
decomposition rank at most one. 
Then it is proved that 
$A$ is nuclear, quasidiagonal and has strict comparison 
if and only if $A$ has finite decomposition rank. 
For such $A$, we also give a direct proof 
that $A$ tensored with a UHF-algebra has tracial rank zero. 
Applying this characterization, we obtain a counter-example to the
Powers-Sakai conjecture.

\end{abstract}

\section{Introduction}\label{Sec1}

E. Kirchberg and W. Winter \cite{KW} introduced 
the notion of decomposition rank for nuclear $\mathrm{C}^*$-algebras 
as a generalization of topological dimension. 
Among others, 
they showed that finite decomposition rank implies quasidiagonality.
The concept of decomposition rank is 
particularly relevant for Elliott's program 
to classify nuclear $\mathrm{C}^*$-algebras by K-theoretic data. 
Indeed, all nuclear stably finite $\mathrm{C}^*$-algebras 
classified so far by their Elliott invariants 
have in fact finite decomposition rank. 
Also, 
this property is expected to be equivalent to other regularity properties. 
W. Winter \cite{Win1} proved that 
any unital separable simple infinite-dimensional $\mathrm{C}^*$-algebra 
 $A$ with finite decomposition rank is $\mathcal{Z}$-absorbing, 
i.e. $A$ absorbs the Jiang-Su algebra $\mathcal{Z}$ tensorially. 
M. R\o rdam \cite{Ror1} showed that 
$\mathcal{Z}$-absorption implies strict comparison 
for unital simple exact $\mathrm{C}^*$-algebras. 
In the present paper we provide the converse direction of these results 
under the assumption that the algebra has a unique tracial state. 

\begin{theorem}\label{MainThm}
Let $A$ be a unital separable simple $\mathrm{C}^*$-algebra 
with a unique tracial state. 
If $A$ is nuclear, quasidiagonal and has strict comparison, 
then the decomposition rank of $A$ is at most three. 
\end{theorem}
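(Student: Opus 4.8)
The plan is to reduce the problem to the UHF-stabilized situation, where a much stronger bound is available, and then to ``de-tensor'' the UHF algebra using $\mathcal{Z}$-absorption. Concretely, I would first invoke the preceding result that, since $A$ is unital, separable, simple, nuclear and quasidiagonal with a unique tracial state, the tensor product $A \otimes \mathcal{Q}$ with the universal UHF algebra $\mathcal{Q}$ satisfies $\mathrm{dr}(A \otimes \mathcal{Q}) \le 1$; the same argument applies to $A \otimes M_{p^\infty}$ for any supernatural number, so in particular for coprime primes $p, q$ we have $\mathrm{dr}(A \otimes M_{(pq)^\infty}) \le 1$. This yields, for every finite set and tolerance, an approximate factorization of the identity on $A \otimes M_{(pq)^\infty}$ through a finite-dimensional algebra by completely positive contractive maps whose ``upward'' map splits into two order zero summands.

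The second ingredient is $\mathcal{Z}$-stability, and this is where strict comparison enters. Here I would use strict comparison together with the uniqueness of the trace to establish \emph{property (SI)} in the central sequence algebra $A_\infty \cap A'$, and then deduce that $A$ absorbs the Jiang-Su algebra tensorially, i.e.\ $A \cong A \otimes \mathcal{Z}$. Strict comparison controls the comparison of positive elements by the trace, property (SI) upgrades this to the existence of the isometries in the central sequence algebra that drive the McDuff-type absorption argument, and the uniqueness of $\tau$ is precisely what makes the tracial comparison in the sequence algebra tractable.

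With these two facts in hand, the dimension-reduction step combines them. Writing $A \cong A \otimes \mathcal{Z}$ and realizing $\mathcal{Z}$ as an inductive limit of prime dimension drop algebras $Z_{p,q} = \{ f \in C([0,1], M_p \otimes M_q) : f(0) \in M_p \otimes 1,\ f(1) \in 1 \otimes M_q \}$ with $p, q$ coprime, I would build a four-colored approximation of $\mathrm{id}_A$: a $2$-coloring of a fine partition of the interval $[0,1]$ (a one-dimensional complex) contributes two colors from the base, while on each fibre the $\mathrm{dr} \le 1$ approximation of the UHF-stabilized algebra $A \otimes M_{(pq)^\infty}$ contributes two more. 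The endpoint conditions of $Z_{p,q}$ are respected because at $t = 0$ and $t = 1$ the relevant fibre algebra degenerates to the appropriate corner. Multiplying the two $2$-colorings decomposes the upward map into $4 = 2 \cdot 2$ order zero summands, which is exactly $\mathrm{dr}(A) \le 3$; morally this is the ``interval doubling'' estimate $\mathrm{dr}(A) \le 2\,\mathrm{dr}(A \otimes \mathcal{Q}) + 1$ applied to $\mathrm{dr}(A \otimes \mathcal{Q}) \le 1$.

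The main obstacle I anticipate is twofold. First, deriving property (SI), and hence $\mathcal{Z}$-stability, from strict comparison is genuinely delicate and is where the unique-trace hypothesis is indispensable. Second, in the dimension-reduction step one must check that the glued maps are honestly completely positive and contractive and that the interpolation across the overlaps of the interval partition does not destroy the order zero property of each color; matching the boundary conditions of the dimension drop algebra with the two-colored fibrewise approximations, while keeping everything approximately multiplicative and approximately central, is the technical crux of the construction.
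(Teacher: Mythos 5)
Your first two steps are exactly the paper's: $\mathcal{Z}$-absorption is quoted from \cite[Theorem 1.1]{MS1} (this is where strict comparison, property (SI) and the unique trace enter), and $\mathrm{dr}(A\otimes\mathcal{U})\le 1$ is Theorem~\ref{ThmU}. The gap is in your de-tensoring step, and it is twofold. First, you assert that the proof of Theorem~\ref{ThmU} ``applies to $A\otimes M_{p^\infty}$ for any supernatural number.'' As written it does not: that proof takes the u.c.p.\ approximately multiplicative maps $\rho_n:A\to M_{k_n}$ furnished by quasidiagonality and composes them with \emph{unital} embeddings $\sigma_n:M_{k_n}\to\mathcal{U}_0$; such embeddings exist for arbitrary $k_n$ precisely because $\mathcal{U}_0$ is the universal UHF algebra. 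For $\mathcal{U}_0=M_{(pq)^\infty}$ one would need the sizes $k_n$ to divide $(pq)^\infty$, which quasidiagonality does not provide and which you do not arrange. Second, there is a type mismatch in your gluing: the fibres of $A\otimes Z_{p,q}$ (prime dimension drop, finite matrix fibres) are $A\otimes M_p\otimes M_q\cong M_{pq}(A)$, whose decomposition rank equals $\mathrm{dr}(A)$ --- the very quantity being bounded --- so using ``the $\mathrm{dr}\le1$ approximation of $A\otimes M_{(pq)^\infty}$ on each fibre'' is circular as well as ill-typed: those upward maps take values in $A\otimes M_{(pq)^\infty}$, not in the fibre. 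The natural repair is to pass to UHF-fibred dimension drop algebras $Z_{p^\infty,q^\infty}$ and use \cite{RW} to write $\mathcal{Z}$ as an inductive limit of these, but then the first gap becomes essential rather than incidental. Nor can you evade it by putting $\mathcal{U}$ at both endpoints: for $Z_{\mathcal{U},\mathcal{U}}=\{f\in C([0,1],\mathcal{U}\otimes\mathcal{U})\mid f(0)\in\mathcal{U}\otimes1,\ f(1)\in1\otimes\mathcal{U}\}$ one computes $K_0\cong\mathbb{Q}$ with $[1]=1$, and since $\mathrm{Hom}(\mathbb{Q},\mathbb{Z})=0$ there is no unital homomorphism into $\mathcal{Z}$; coprimality of the two fibres is forced, hence so are non-universal UHF stabilizations.

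The paper's proof avoids both problems by never leaving $\mathcal{U}$. After Theorem~\ref{ThmU}, it invokes projectivity of cones over finite-dimensional algebras \cite{Lor} to perturb the order zero maps $\psi_i$ so that $\psi_i(E_i)\subset A\otimes M_l$ for a \emph{single} unital matrix subalgebra $M_l\subset\mathcal{U}$; the de-tensoring then happens at the level of this finite matrix algebra. Concretely, it constructs two order zero maps $\lambda_0,\lambda_1:M_l\to Z$ into the dimension drop algebra $Z$ with fibres $M_{lm}\otimes M_{lm+1}$ (consecutive, hence automatically coprime, integers), using a unitary path implementing the flip on $M_l\otimes M_l$, and satisfying $\lambda_0(1_l)+\lambda_1(1_l)\ge 1-\frac{1}{m-1}$. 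The compositions $(\id_A\otimes\lambda_i)\circ\psi_j$ then give the same $2\times2=4$ colours you are aiming for, and $Z$ embeds unitally into $\mathcal{Z}$, so the approximation lands in $A\otimes Z\subset A\otimes\mathcal{Z}\cong A$. To complete your argument you must either genuinely prove $\mathrm{dr}(A\otimes M_{p^\infty})\le1$ for non-universal UHF algebras (not available in the paper, and requiring new input on quasidiagonal approximations with prescribed matrix sizes), or insert the projectivity step and an explicit finite-matrix-into-dimension-drop construction --- at which point you have reproduced the paper's proof.
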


The notion of strict comparison was first introduced by B. Blackadar \cite{Bl} for projections, which was called FCQ2. Later this definition is extended to positive elements in the following sense. Throughout this paper, for a $\mathrm{C}^*$-algebra $A$ we denote by $T(A)$ the set of tracial states on $A$. We say that $A$ has {\it strict comparison} if for two positive elements $a$, $b$ in $M_k(A)$ with $\lim_{n\to \infty}\tau(a^{1/n}) < \lim_{n\to \infty}\tau(b^{1/n})$ for any $\tau\in T(A)$ there exist $r_n\in M_k(A)$, $n\in\N$ such that $r_n^*br_n\to a$, where $\tau$ is regarded as an unnormalized trace on $M_k (A)$.  The precise definition of decomposition rank is given in Definition \ref{DefDec} and the proof of the main theorem is given in Section \ref{Sec5}. 

In 2008, A. S. Toms and W. Winter conjectured that properties of
 finite decomposition rank, 
 $\mathcal{Z}$-absorption, and  strict comparison are equivalent
for unital separable simple infinite-dimensional finite nuclear $\mathrm{C}^*$-algebras 
(see \cite[Conjecture 0.1]{Win1} for example). 
The theorem above, together with the results in \cite{KW,Ror1,Win1,MS1}, 
gives a partial affirmative answer to this conjecture in the following sense. 

\begin{corollary}
Let $A$ be 
a unital separable simple infinite-dimensional nuclear $\mathrm{C}^*$-algebra 
with a unique tracial state. 
Then the following are equivalent. 
\begin{enumerate}
\item $A$ has finite decomposition rank. 
\item $A$ is quasidiagonal and is $\mathcal{Z}$-absorbing. 
\item $A$ is quasidiagonal and has strict comparison. 
\end{enumerate}
\end{corollary}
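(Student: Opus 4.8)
The plan is to prove the three conditions equivalent by establishing the cyclic chain of implications $(i)\Rightarrow(ii)\Rightarrow(iii)\Rightarrow(i)$. The standing hypotheses of the corollary --- $A$ unital, separable, simple, infinite-dimensional, nuclear, with a unique tracial state --- are retained throughout, and only the regularity property attached to each condition changes around the cycle. Two of the three arrows will be formal consequences of results already recalled in the introduction, so the entire mathematical weight of the corollary will rest on the closing implication, which is precisely Theorem \ref{MainThm}.

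First I would treat $(i)\Rightarrow(ii)$. If $A$ has finite decomposition rank, then $A$ is quasidiagonal by the Kirchberg--Winter theorem \cite{KW}; and since $A$ is moreover unital, separable, simple and infinite-dimensional, Winter's theorem \cite{Win1} shows that finite decomposition rank forces $A$ to be $\mathcal{Z}$-absorbing. Together these give $(ii)$. Next, for $(ii)\Rightarrow(iii)$, quasidiagonality passes over unchanged, so it suffices to derive strict comparison from $\mathcal{Z}$-absorption; as $A$ is nuclear it is in particular exact, and hence R\o rdam's theorem \cite{Ror1} applies to yield strict comparison, establishing $(iii)$.

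The remaining implication $(iii)\Rightarrow(i)$ is where all the substance lies, and I expect it to be the genuine obstacle. Under $(iii)$ the algebra $A$ is nuclear, quasidiagonal and has strict comparison, so Theorem \ref{MainThm} applies directly and bounds the decomposition rank of $A$ by three, in particular making it finite. Whereas the first two arrows merely assemble known theorems around the preserved hypotheses, this last arrow is the decomposition-rank estimate that constitutes the heart of the paper, carried out in Section \ref{Sec5}.
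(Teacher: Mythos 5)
Your proposal is correct and follows essentially the same route as the paper: the corollary is exactly the cyclic chain (i)$\Rightarrow$(ii) via Kirchberg--Winter \cite{KW} and Winter \cite{Win1}, (ii)$\Rightarrow$(iii) via R\o rdam \cite{Ror1} (using nuclear $\Rightarrow$ exact), and (iii)$\Rightarrow$(i) via Theorem \ref{MainThm}, which is where the paper places all the real work. The only cosmetic difference is that the paper also cites \cite{MS1}, but that result (strict comparison implies $\mathcal{Z}$-absorption) is invoked inside the proof of Theorem \ref{MainThm} itself, so your argument implicitly uses it as well.
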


The technique that we develop in proving Theorem \ref{MainThm} also allows us to bound other ranks of nuclear $\mathrm{C}^*$-algebras. In Section \ref{SecTAF}, for $A$ as in Theorem 1.1, we show that $A$ tensored with a UHF-algebra has tracial rank zero. This enables us to remove the technical condition from H. Lin's result \cite[Corollary 5.10]{Lin2} for nuclear $\mathrm{C}^*$-algebras, and to provide a partial answer to his conjecture mentioned in the introduction of \cite{Lin2}. 
As an application,
we construct a counter-example to the Powers-Sakai conjecture.

In Section \ref{SecKir}, we show that 
any Kirchberg algebra has nuclear dimension at most three. We remark that W. Winter and J. Zacharias \cite[Theorem 7.5]{WZ10Adv} proved that if $A$ is a Kirchberg algebra satisfying the Universal Coefficient Theorem (UCT), then it has finite nuclear dimension.

\section{Murray-von Neumann equivalence}\label{Sec2}
A celebrated result of A. Connes says that 
injective factors with separable predual are 
approximately finite dimensional (AFD), see \cite{Con3}. 
The proof relies on his deep study of automorphisms of factors \cite{Con1, Con2}. 
By now, two alternative proofs which do not use automorphisms are known. 
One is due to U. Haagerup \cite{Haa}. 
His proof uses trace preserving completely positive maps 
instead of automorphisms. 
S. Popa gave another short proof by using excision of amenable traces \cite{Pop1}. 
In the present paper, we focus on the proofs by Connes and Haagerup, 
in which the Murray-von Neumann equivalence for projections 
plays an essential role. 
They constructed approximation by finite dimensional subalgebras 
by using partial isometries 
inducing the Murray-von Neumann equivalence for certain projections. 
In this section 
we establish a generalization of the Murray-von Neumann equivalence 
(Lemma \ref{FunLem}). 
In our setting, the `equivalence' is given by two contractions. 
These two contractions will become involved in 
the estimate of decomposition rank 
of UHF-absorbing $\mathrm{C}^*$-algebras in Section \ref{Sec4}.

We say that a $\mathrm{C}^*$-algebra $A$ has {\it strict comparison for projections} if for any two projections $p$ and $q$ in $M_k(A)$ satisfying $\tau(p)< \tau(q)$ for any tracial state $\tau$ of $A$, there exists a partial isometry $v$ in $M_k(A)$ such that $v^*v=p$ and $vv^*\leq q$.
\begin{lemma}\label{FunLem}
Let $A$ be a unital $\mathrm{C}^*$-algebra with strict comparison for projections. 
Let $p,q$ be projections in $A$ and $n\in \N$. 
If $\tau(p)=\tau(q)$ for any tracial state $\tau$ of $A$, 
then there exist two contractions $v_i$ in $A\otimes M_n$, $i=0,1$ such that 
\[
\left\lVert\sum_{i=0,1}v_i^*v_i-p\otimes1_n\right\rVert\leq\frac{4}{n},\quad 
\left\lVert\sum_{i=0,1}v_iv_i^*-q\otimes1_n\right\rVert\leq\frac{4}{n}, 
\] 
and 
\[
\dist(v_i^*v_i,\{p\otimes x\mid x\in M_n\})\leq\frac{2}{n}\quad\text{for}\ i=0,1. 
\] 
\end{lemma}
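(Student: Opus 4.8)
My plan is to read the statement as a \emph{quantitative, operator-norm} version of the fact that, under strict comparison for projections, equality of traces $\tau(p)=\tau(q)$ forces $p$ and $q$ to be Cuntz equivalent (hence only \emph{softly}, not Murray--von Neumann, equivalent in general). The equality of traces is exactly what blocks an exact partial isometry, so the whole point of amplifying by $M_n$ is to be able to apply strict comparison to \emph{codimension-one} amplified projections: for every $\tau$ one has $\tau\bigl(p\otimes(1_n-e_{nn})\bigr)=(n-1)\tau(p)$ while $\tau(q\otimes1_n)=n\tau(q)=n\tau(p)$, so by sacrificing one of the $n$ diagonal slots we \emph{gain} a strict inequality and strict comparison for projections yields a partial isometry $w$ with $w^*w=p\otimes(1_n-e_{nn})$ and $ww^*\le q\otimes1_n$. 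The only delicate point in setting this up is that the strict inequality $(n-1)\tau(p)<n\tau(q)$ must hold for \emph{every} $\tau$ simultaneously; this is immediate once $\inf_\tau\tau(p)>0$, and the degenerate case (in particular $p=0$, which forces $q=0$ in the presence of a trace and hence $v_0=v_1=0$) is disposed of separately. In the intended application the trace is faithful, so $p\neq0$ gives $\tau(p)>0$.

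A single such $w$ is useless by itself, because the omitted slot $p\otimes e_{nn}$ leaves an operator-norm defect of size $1$, not $4/n$. The mechanism that produces the $1/n$ scale is \emph{smearing the omitted slot around all $n$ coordinates by the cyclic shift}. Writing $U\in M_n$ for the cyclic shift unitary, the heuristic identity
\[
\frac1n\sum_{k=0}^{n-1}(1\otimes U^k)\bigl(p\otimes(1_n-e_{nn})\bigr)(1\otimes U^{-k})=\Bigl(1-\frac1n\Bigr)\,p\otimes1_n
\]
shows that after symmetrizing the codimension-one support over the shift the defect from $p\otimes1_n$ drops to $1/n$ in norm. The plan is to realize a controlled version of this: build $v_0,v_1$ out of $w$ together with slowly varying diagonal weights $\{c_j\}_{j=1}^n$ (a discrete partition of unity with $|c_{j+1}-c_j|=O(1/n)$) and a one-step slot shift, so that the two pieces have essentially orthogonal domains and ranges, their domain supports $v_i^*v_i$ are of the form $p\otimes(c\text{-weighted projection})$ up to an error $2/n$ coming from where the weights mix adjacent slots (this is precisely the content of the $\dist(v_i^*v_i,\{p\otimes x\})\le2/n$ bound), and the single genuinely inequivalent slot is pushed to the boundary of the amplification. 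The two contractions, rather than one, are needed for the bookkeeping: one carries the ``bulk'' transport and the other the boundary correction, and together their domain supports fill $p\otimes1_n$ and their range supports fill $q\otimes1_n$.

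With the pieces in place the three estimates reduce to routine computation: the trace slack $n-1$ versus $n$, together with the slowly varying weights, gives $\bigl\lVert\sum_i v_i^*v_i-p\otimes1_n\bigr\rVert\le4/n$ and the symmetric statement $\bigl\lVert\sum_i v_iv_i^*-q\otimes1_n\bigr\rVert\le4/n$, while the mixing of at most adjacent slots by the weights yields the distance bound $2/n$. The \textbf{main obstacle} is the middle step: converting mere \emph{equality} of traces---which a priori only buys strict comparison at the cost of a whole matrix slot, i.e.\ an operator-norm-$1$ inequivalence---into an operator-norm bound of order $1/n$. Everything hinges on arranging the construction so that the unavoidable single-slot defect between $p$ and $q$ is distributed across all $n$ coordinates of $M_n$ instead of being concentrated, and on keeping the two contractions' supports orthogonal enough that the cross terms do not spoil the norm estimates; the trace and distance bounds are comparatively mechanical once this smearing is correctly engineered.
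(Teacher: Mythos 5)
Your opening moves match the paper's in spirit: amplification buys a strict trace inequality at the cost of a slot (the paper compares $p\otimes e_1$ against $q\otimes(e_1+e_2)$, one slot against two, rather than your $n-1$ against $n$), and slowly varying weights with step $O(1/n)$ are indeed what produce the final estimates. But the mechanism you propose for the middle step---one partial isometry $w$ with $w^*w=p\otimes(1_n-e_{nn})$, $ww^*\le q\otimes 1_n$, smeared by the cyclic shift---fails, and the failure is exactly on the side you do not discuss: the range. Strict comparison gives no control over \emph{where} $ww^*$ sits inside $q\otimes 1_n$; the defect $q\otimes 1_n-ww^*$ is an arbitrary projection of trace $\tau(q)$ and can be completely delocalized. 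Concretely, suppose $q$ dominates a projection $q'$ with $\tau(q')=\tau(q)/n$ for all $\tau$ (typical in the paper's applications, where the relevant algebras absorb UHF algebras); then $(n-1)$ copies of $p$ and $n$ copies of $q-q'$ have equal traces, and strict comparison may hand you $w$ with $ww^*=(q-q')\otimes 1_n$. This projection commutes with every element of $\C 1_A\otimes M_n$, so \emph{any} $v_0,v_1$ manufactured from $w$ by diagonal weights, slot shifts, and sums of such---i.e.\ $v_i=\sum_k x_kwy_k$ with $x_k,y_k\in\C 1_A\otimes M_n$---satisfies $v_iv_i^*\le(q-q')\otimes 1_n$, whence $\bigl\lVert\sum_{i}v_iv_i^*-q\otimes 1_n\bigr\rVert\geq\lVert q'\otimes 1_n\rVert=1$. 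No smearing recovers the $4/n$ bound. (Your shift-conjugates also have overlapping, non-orthogonal domains, so even the domain-side cross terms are uncontrolled, but the range obstruction alone is decisive.)

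The missing idea is that strict comparison must be applied not once but $n-1$ times, in an \emph{alternating chain} that localizes ranges as well as domains. The paper takes $r_1$ with $r_1^*r_1=p\otimes e_1$ and $r_1r_1^*\le q\otimes(e_1+e_2)$; then compares in the opposite direction to get $r_2$ with $r_2r_2^*=q\otimes(e_1+e_2)-r_1r_1^*$ \emph{exactly} and $r_2^*r_2\le p\otimes(e_2+e_3)$; then $r_3$ with $r_3^*r_3=p\otimes(e_2+e_3)-r_2^*r_2$ exactly and $r_3r_3^*\le q\otimes(e_3+e_4)$; and so on. The alternation of which side is exact forces both families $\{r_j^*r_j\}$ and $\{r_jr_j^*\}$ to be mutually orthogonal, to tile $p\otimes 1_n$ and $q\otimes 1_n$ up to boundary slots, and to be confined to windows of two adjacent slots. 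Only after this does the weighting you envisage work: $v_0=\sum_j\sqrt{(n-j)/n}\,r_j$ satisfies $\lVert v_0^*v_0-p\otimes f\rVert\le 2/n$ and $\lVert v_0v_0^*-q\otimes f\rVert\le 2/n$ with $f=\sum_j\frac{n-j}{n}e_j$, and a second chain with the complementary weights gives $v_1$. So your diagnosis of the obstacle is accurate, but the chain is not dispensable bookkeeping; it is the core of the proof, and without it the construction you outline cannot meet the range estimate.
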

\begin{proof}
Let $e_j$, $j=1,2,\dots,n$ be mutually orthogonal minimal projections of $M_n$. When $n=1$, $2$, or $3$ the statement is trivial, thus we may assume that $n\geq 4$. 
Since $A$ has strict comparison for projections, 
there exists a partial isometry $r_1\in A\otimes M_n$ such that 
\begin{align*} 
 &r_1^*r_1=p\otimes e_1,& &r_1r_1^* \leq q\otimes (e_1+e_2). \\
\intertext{By the same reason, we obtain a partial isometry $r_2\in A\otimes M_n$ such that} 
 &r_2^*r_2 \leq p\otimes (e_2+e_3),& &r_2r_2^* = q\otimes(e_1+ e_2)- r_1r_1^*. \\
\intertext{In the same way, there exists a partial isometry $r_3\in A\otimes M_n$ such that}
 &r_3^*r_3=p\otimes (e_2+e_3)- r_2^*r_2,& &r_3r_3^* \leq q\otimes (e_3+e_4). 
\end{align*}
Repeating this argument we have partial isometries $r_j\in A\otimes M_n$, $j=1,2,\dots,n-1$. 
Set 
\[ f=\sum_{j=1}^n \frac{n-j}{n}e_j,\quad v_0= \sum_{j=1}^{n-1} \sqrt\frac{n-j}{n}r_j. \]
Since $\{r_jr_j^*|j=1,2,\dots,n-1\}$ and $\{r_j^*r_j|j=1,2,\dots,n-1\}$ are 
sets of mutually orthogonal projections, it follows that 
\[
\lVert v_0^*v_0-p\otimes f\rVert
=\left\lVert\sum_{j=1}^{n-1}\frac{n-j}{n}r_j^*r_j-p\otimes f\right\rVert
\leq\frac{2}{n}
\]
and 
\[
\lVert v_0v_0^*-q\otimes f\rVert
=\left\lVert\sum_{j=1}^{n-1}\frac{n-j}{n}r_jr_j^*-q\otimes f\right\rVert
\leq\frac{2}{n}. 
\]
Similarly we can construct a contraction $v_1\in A\otimes M_n$ such that 
\[
\left\lVert v_1^*v_1-p\otimes\left(1_n-f\right)\right\rVert\leq\frac{2}{n}
\]
and 
\[
\left\lVert v_1v_1^*-q\otimes\left(1_n-f\right)\right\rVert\leq\frac{2}{n}, 
\]
thereby completing the proof. 
\end{proof}
\section{Relative Property (SI)}\label{Sec3}
In our previous work \cite[Proposition 4.8]{MS2}, we showed  certain 
strict comparison of central sequence algebras by using a technical condition named property (SI). The aim of this section is to introduce a relative version of property (SI) and to show a variant of \cite[Proposition 4.8]{MS2}.

Until the end of this section, we let $A$ and $B$ be unital separable simple nuclear infinite-dimensional $\mathrm{C}^*$-algebras and suppose that $B$ has strict comparison. First, we recall the definitions of central sequence algebras. 
Let $\omega$ be a free ultrafilter on $\N$, and let 
\begin{align*}
&\|b\|_2 = \max_{\tau\in T(B)} \tau(b^*b)^{1/2}\quad {\rm for }\ b\in B,\\ 
&c_{\omega}(B)
=\left\{ (b_n)_n\in \ell^{\infty}(\N, B)\ |\ \lim_{n\to\omega}\|b_n\|=0\right\}. 
\end{align*}
We denote by $B^{\omega}$ 
the ultraproduct $\mathrm{C}^*$-algebra $\ell^{\infty}(\N,B)/c_{\omega}(B)$. 

Set 
\[
J=\left\{(b_n)_n \in \ell^{\infty} (\N, B)\
|\ \lim_{n\to\omega} \| b_n\|_2 =0\right\}
\]
and $M=\ell^{\infty} (\N, B)/J$. 
Since $c_{\omega}(B)\subset J$ 
we regard $M$ as a quotient algebra of $B^{\omega}$, 
and denote by $\Phi$ the quotient map from $B^{\omega}$ to $M$. 
Note that if $B$ has finitely many extreme traces 
then $M$ is isomorphic to a finite direct sum of II$_1$-factors.


Let $\pi$ be a unital embedding of $A$ into $B^{\omega}$. We consider the relative commutants  
\[ \cB =  B^{\omega}\cap\pi(A)'\quad{\rm and}\quad \cM =  M\cap\Phi(\pi(A))'.\]
The Choi-Effros lifting theorem allows us to get a sequence of unital completely positive maps $\pi_n : A\rightarrow B$ such that $(\pi_n(a))_n = \pi(a)$ in $B^{\omega}$ for any $a\in A$.

The following theorem is based on the technique of Kirchberg and R\o rdam \cite[Theorem 3.3]{KR}. Combining this result and a generalized property (SI) (Lemma \ref{LemSI}), we shall show the strict comparison of $\cB$ for projections in Proposition \ref{PropStriComp}.
\begin{theorem}[Kirchberg-R\o rdam]\label{ThmKR}
The restriction $\Phi |_{\cB}: \cB \rightarrow \cM$ is surjective.
\end{theorem}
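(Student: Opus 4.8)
The plan is to prove that the restriction $\Phi|_{\cB} : \cB \to \cM$ is surjective by combining the Choi-Effros lifting data with the Kirchberg-R\o rdam technique referenced in \cite[Theorem 3.3]{KR}. The statement asserts that every element of the relative commutant $\cM = M \cap \Phi(\pi(A))'$ can be lifted to an element of $\cB = B^{\omega} \cap \pi(A)'$. Since $\Phi$ is the quotient map $B^{\omega} \to M$ induced by the inclusion $c_{\omega}(B) \subset J$, surjectivity of $\Phi$ itself is automatic; the content is that the relative commutant property is preserved under the lift, i.e.\ one can choose the preimage to commute exactly with $\pi(A)$ rather than merely modulo $J$.

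\medskip
\noindent\textbf{Approach.} First I would take an arbitrary $y \in \cM$ and a $\Phi$-preimage $x \in B^{\omega}$, which exists because $\Phi$ is surjective as a quotient map. The defect is that $x$ commutes with $\pi(A)$ only modulo $\ker\Phi = J/c_{\omega}(B)$, so $[x, \pi(a)] \in J/c_{\omega}(B)$ for every $a \in A$. The goal is to perturb $x$ by an element of $J/c_{\omega}(B)$ so that the commutator vanishes exactly in $B^{\omega}$. Because $A$ is separable, I would fix a dense sequence $(a_k)_k$ in $A$ and work with a representing sequence $(x_n)_n \in \ell^{\infty}(\N, B)$ for $x$, together with the lifts $\pi_n : A \to B$ supplied by Choi-Effros satisfying $(\pi_n(a))_n = \pi(a)$. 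The key quantitative input is that $\lim_{n\to\omega}\|[x_n, \pi_n(a_k)]\|_2 = 0$ for each $k$, which reflects exactly that $[x,\pi(a_k)] \in \ker \Phi$.

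\medskip
\noindent\textbf{Key steps.} The heart of the argument, following Kirchberg-R\o rdam, is an averaging/projection step that converts smallness in the $\|\cdot\|_2$-seminorm (tracial $L^2$) into smallness in the operator norm after replacing $x_n$ by a suitable correction. Concretely, for each fixed finite set of generators and each tolerance $1/m$, I would use the fact that $B$ (hence each $B$ in the sequence) is nuclear and simple with the trace structure controlling $\|\cdot\|_2$ to produce, via a conditional-expectation-type or polar-decomposition argument in the von Neumann algebra completion $\pi_n(A)'' \subset \pi(B_n)''$, a nearby element $x_n'$ that commutes with $\pi_n(a_k)$ up to norm $1/m$ while remaining $\|\cdot\|_2$-close to $x_n$. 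A standard reindexing (diagonal) argument over $\omega$ then assembles the $x_n'$ into a genuine element $x' \in B^\omega$ with $[x',\pi(a_k)]=0$ for all $k$, and by density $x' \in \cB$; since the correction lies in $\ker\Phi$, we retain $\Phi(x') = \Phi(x) = y$.

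\medskip
\noindent\textbf{Main obstacle.} The hardest part will be the passage from the $\|\cdot\|_2$-estimate to an operator-norm estimate while simultaneously controlling the relative commutant condition. This is precisely where one cannot argue purely in $M$ (where only the tracial seminorm survives) but must lift the perturbation back into $B^{\omega}$ in a norm-controlled way. The tool of choice is the Kirchberg-R\o rdam machinery: one exploits the $\sigma$-unital, separable structure to reduce to finitely many constraints at each stage, and uses the trace-preserving completely positive approximation (ultimately traceable to nuclearity and the unique-trace structure of the factor quotient $M$) to realize the commuting correction. I expect no genuinely new difficulty beyond faithfully adapting \cite[Theorem 3.3]{KR} to the relative setting with $\pi(A)$ in place of the full algebra, so the proof should proceed by citing and specializing that result.
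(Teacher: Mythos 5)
Your reduction is the right one (lift $y\in\cM$ to some $x\in B^{\omega}$, observe that $[x,\pi(a)]\in\ker\Phi$ for all $a\in A$, perturb $x$ within $\ker\Phi$ so that the commutators vanish, then run a diagonal argument over a dense sequence in $A$), and your final reindexing step is exactly what the paper does. But the central step --- producing, for a fixed finite $F\subset A$ and $\varepsilon>0$, a correction of the representing sequence $(x_n)_n$ whose commutators with $\pi_n(F)$ are small in \emph{operator norm} while the correction stays in $\ker\Phi$ --- is precisely the point you delegate to ``a conditional-expectation-type or polar-decomposition argument in the von Neumann algebra completion,'' and that mechanism does not work. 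Trace-preserving conditional expectations and averaging arguments live in the weak closure and give only $\lVert\cdot\rVert_2$-control: the element they produce lies in the von Neumann completion rather than in $B$, and pulling it back into $B$ (e.g.\ by Kaplansky density) again preserves only $\lVert\cdot\rVert_2$-estimates, so you return to the hypothesis you started from (commutators small in $2$-norm). The trace structure of $M$ cannot by itself upgrade $\lVert\cdot\rVert_2$-smallness to norm-smallness; this is exactly the obstruction you yourself flag as the main obstacle, and the tools you name do not resolve it. (Note also that nuclearity of $B$ plays no role in this theorem; nuclearity enters only through Choi--Effros to provide the lifts $\pi_n$, which were fixed beforehand.)

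The missing idea --- and the actual content of the Kirchberg--R\o rdam technique --- is a quasicentral approximate unit. Set $D=C^*(\pi(A)\cup\{x\})\subset B^{\omega}$ and $I=\ker(\Phi|_D)$, an ideal in the separable $\mathrm{C}^*$-algebra $D$. The hypothesis $y\in\cM$ says exactly that $[x,\pi(a)]\in I$ for every $a\in A$. Choose an approximate unit $(e_{\lambda})$ of $I$ which is quasicentral in $D$. Since $[x,\pi(a)]\in I$, one has $(1-e_{\lambda})[x,\pi(a)](1-e_{\lambda})\to0$ in norm, and quasicentrality lets one move $(1-e_{\lambda})$ across $\pi(a)$, so that $\lVert[(1-e_{\lambda})x(1-e_{\lambda}),\pi(a)]\rVert\to0$. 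Thus the compression $(1-e_{\lambda})x(1-e_{\lambda})$ has asymptotically norm-small commutators with $\pi(A)$, while $e_{\lambda}\in I\subset\ker\Phi$ guarantees that its image under $\Phi$ is still $y$. This is how ``modulo $J$'' information is converted into operator-norm information: not by averaging, but by cutting with an approximate unit of the ideal in which the commutators live. Once this is in place, your diagonal argument over the ultrafilter assembles the compressions into an element of $\cB$ mapping onto $y$, exactly as you describe.
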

\begin{proof}
Let $x$ be a contraction in $\cM$, and $(b_n)_n\in B^{\omega}$ be such that $\Phi((b_n)_n) = x$. Set 
\[D= C^*( \pi( A) \cup \{(b_n)_n\})\subset B^{\omega}\quad{\rm and} \quad I =\ker (\Phi |_D) \subset D.\] 
Since $x\in \Phi( \pi (A)) '$ it follows that $[\pi (a), (b_n)_n]\in I$ for any $a\in A$. Let $e_{\lambda}\in I$, $\lambda\in \Lambda$ be a quasicentral approximate unit of $I$. Then we have
\begin{align*}
0&= \lim_{\lambda\to \infty} \left\lVert (1-e_{\lambda})[ (b_n)_n, \pi (a)] (1-e_{\lambda})\right\rVert \\
&=\lim_{\lambda\to \infty} \left\lVert[ (1-e_{\lambda})(b_n)_n (1-e_{\lambda}), \pi(a)]\right\rVert \quad{\rm for\ any\ }a\in A.
\end{align*}
This implies that for any finite subset $F$ of $A$ and $\varepsilon >0$ there exists a sequence $e_{F,\varepsilon,n}$, $n\in\N$ of positive contractions in $B$ such that
\[ \lim_{n\to \omega} \left\lVert e_{F, \varepsilon, n}\right\rVert_2=0,\quad \lim_{n\to \omega} \left\lVert[ (1-e_{F, \varepsilon, n}) b_n (1- e_{F, \varepsilon, n}), \pi_n(a)]\right\rVert < \varepsilon,\]
for all $a\in F$.

Let $F_n$, $n\in\N$ be an increasing sequence of finite subsets of $A$ whose union is dense in $A$ and $\varepsilon_n>0$, $n\in\N$ a decreasing sequence which converges to 0. For any $n\in\N$ we inductively obtain a subset $\Omega_n\in \omega$ such that $\Omega_{n+1}\subset \Omega_n$, $\bigcap_{n\in\N} \Omega_n =\emptyset$ and 
\[ \left\lVert e_{F_n, \varepsilon_n, m}\right\rVert_2 < \varepsilon_n,\quad \left\lVert  (1- e_{F_n, \varepsilon_n, m}) b_m (1- e_{F_n, \varepsilon_n, m}), \pi_m (a) ] \right\rVert < \varepsilon_n,\]
for any $m \in \Omega_n$ and $a\in F_n$. We define $e_m =0$ for $m\in \N \setminus\Omega_1$ and $e_m = e_{F_n, \varepsilon_n, m}$ for $m\in \Omega_n\setminus \Omega_{n+1}$. Then it follows that 
\[ \lim_{n\to \omega} \left\lVert e_n\right \rVert_2=0,\quad \lim_{n\to\omega}\left\lVert [ (1-e_n) b_n(1-e_n), \pi_n (a)]\right\rVert =0, \]
for any $\displaystyle a\in \bigcup_{n\in\N}F_n$. This means that
\[ \Phi(((1-e_n)b_n(1-e_n))_n) =x,\quad ((1-e_n)b_n(1-e_n))_n\in \cB.\]
\end{proof}

\begin{lemma}\label{LemSI}
Suppose that $T(B)$ has a compact extreme boundary with finite topological dimension. Then $B$ has property (SI) relative to $\pi(A)$ in the following sense: For any sequences $(e_n)_n$ and $(f_n)_n$ of positive contractions in $B$ satisfying $(e_n)_n$, $(f_n)_n\in \cB$, 
\[ (e_n)_n\in J, \quad \lim_{m\to \infty} \lim_{n\to\omega} \min_{\tau\in T(B)} \tau(f_n^m) >0,\]
there exists a sequence $(s_n)_n$ in $B$  such that $(s_n)_n\in \cB$, 
\[ \lim_{n\to \omega}\left\lVert s_n^*s_n - e_n\right\rVert =0,\quad \lim_{n\to\omega}\left\lVert f_ns_n- s_n\right\rVert =0.\]
\end{lemma}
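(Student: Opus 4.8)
The plan is to produce the sequence in the form $s_n=(f_n')^{1/2}r_n$, where $f_n'=g_\delta(f_n)$ is a continuous function of $f_n$ concentrated on the region where $f_n$ is close to $1$, and $r_n\in B$ is supplied by the strict comparison of $B$. Granting that the $r_n$ can be chosen so that $(s_n)_n\in\cB$, the two asserted estimates will follow from routine functional calculus. Thus the entire difficulty is to run the comparison equivariantly with respect to $\pi(A)$, that is, to keep the comparison element inside the relative commutant $\cB$.

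First I would normalise the data by functional calculus. Since $(e_n)_n\in J$ we have $\lim_{n\to\omega}\max_{\tau\in T(B)}\tau(e_n^2)=0$, so for each fixed $\eta>0$ the element $e_ng_\eta(e_n)$ (with $g_\eta=0$ near $0$ and $g_\eta=1$ on $[\eta,1]$) satisfies $\|e_n-e_ng_\eta(e_n)\|\le\eta$, while its support projection $q_n$ obeys $\max_\tau\tau(q_n)\le\eta^{-2}\max_\tau\tau(e_n^2)\to0$. A diagonal argument letting $\eta\to0$ reduces the lemma to the case where $\mathrm{supp}\,e_n$ has trace tending to $0$, uniformly in $\tau$. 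On the other side, monotonicity $f_n^{m+1}\le f_n^m$ gives $\gamma:=\lim_{m\to\infty}\lim_{n\to\omega}\min_\tau\tau(f_n^m)=\inf_m\lim_{n\to\omega}\min_\tau\tau(f_n^m)>0$; choosing $\delta>0$ small and then $m$ large with $(1-\delta)^m<\gamma/2$, the contraction $f_n'=g_\delta(f_n)$ (with $g_\delta=0$ on $[0,1-\delta]$ and $g_\delta=1$ near $1$) satisfies $\min_\tau\tau(\mathrm{supp}\,f_n')\ge\gamma-(1-\delta)^m\ge\gamma/2$ for $\omega$-most $n$. Both $(q_n)_n$ and $(f_n')_n$ remain in $\cB$ because $\cB$ is a $\mathrm{C}^*$-algebra containing $(e_n)_n$ and $(f_n)_n$.

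Now $\max_\tau\tau(\mathrm{supp}\,e_n)<\gamma/2\le\min_\tau\tau(\mathrm{supp}\,f_n')$ for $\omega$-most $n$, i.e. $\lim_k\tau(e_n^{1/k})<\lim_k\tau((f_n')^{1/k})$ for every $\tau$, so the strict comparison of $B$ produces, for each such $n$, an element $r_n\in B$ with $r_n^*f_n'r_n\to e_n$. These $r_n$ need not commute with $\pi(A)$, and forcing them to do so is the crux of the argument and the step I expect to be the main obstacle. Here I would invoke Theorem \ref{ThmKR}: the surjectivity of $\Phi|_\cB$ lets one transport the comparison through the von Neumann quotient $\cM$ and, together with a quasicentral approximate unit for the kernel of $\Phi$ on $C^*(\pi(A)\cup\{(e_n)_n,(f_n')_n\})$ exactly as in the proof of that theorem, to replace the raw comparison element by one nearly commuting with every $\pi_n(a)$. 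The hypothesis that $\deltae T(B)$ is compact of finite topological dimension is what keeps $\cM$ and the trace-kernel ideal $J$ tractable enough for this excision to proceed uniformly in $\tau$; it is the price paid for passing from the single-trace case to a general boundary.

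Granting $(r_n)_n\in\cB$, set $s_n=(f_n')^{1/2}r_n$, so that $(s_n)_n\in\cB$ and $s_n^*s_n=r_n^*f_n'r_n\to e_n$ in norm, which is the first estimate. For the second, the function $t\mapsto g_\delta(t)(1-t)^2$ is dominated by $\delta^2 g_\delta(t)$ on $[0,1]$, whence $s_n^*(f_n-1)^2s_n=r_n^*\,g_\delta(f_n)(1-f_n)^2\,r_n\le\delta^2 r_n^*f_n'r_n=\delta^2 s_n^*s_n$; taking norms gives $\|f_ns_n-s_n\|^2=\|s_n^*(f_n-1)^2s_n\|\le\delta^2\|s_n^*s_n\|\to\delta^2\|e_n\|\le\delta^2$, so $\|f_ns_n-s_n\|\le\delta+o(1)$. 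Running the diagonal argument with $\delta\to0$ then yields $\lim_{n\to\omega}\|f_ns_n-s_n\|=0$, completing the proof.
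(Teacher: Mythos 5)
Your opening reduction and the closing norm estimates are fine, but the step you yourself identify as the crux --- getting the comparison element into $\cB$ --- is exactly where the proposal collapses, and the mechanism you offer for it cannot work. Since $(e_n)_n\in J$, any $s$ with $s^*s=(e_n)_n$ satisfies $\Phi(s)=0$: the entire content of the lemma lives in $\cB\cap\ker\Phi$, which the von Neumann quotient $\cM$ cannot see. Theorem \ref{ThmKR} lifts elements \emph{of} $\cM$ into $\cB$; it gives no control over elements of $\ker\Phi$ at the level of the operator norm, and ``transporting the comparison through $\cM$'' transports only the trivial comparison of $0$ against $\Phi((f_n')_n)$. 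Property (SI) is precisely a statement about the norm (not $\lVert\cdot\rVert_2$) structure of the trace-kernel ideal, which is invisible after applying $\Phi$. The quasicentral approximate-unit trick fares no better: in the proof of Theorem \ref{ThmKR} it converts commutators that are \emph{already} small in $\lVert\cdot\rVert_2$ (because $[\,(b_n)_n,\pi(a)]\in\ker\Phi$) into commutators small in norm, whereas the raw comparison elements $r_n$ supplied by strict comparison of $B$ come with no control whatsoever on $[r_n,\pi_n(a)]$ in either norm, so there is nothing for that trick to act on.

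What the paper actually does at this point uses the hypotheses your argument never touches. Nuclearity of $A$ factors the identity approximately as $\sigma\circ\rho$ through a matrix algebra $M_N$; Voiculescu's theorem and the Akemann--Anderson--Pedersen excision theorem produce $d_i\in A$, a pure state $\varphi$ and a positive contraction $a$ with $ad_i^*xd_ja\approx\varphi(d_i^*xd_j)a^2$; and the hypothesis on $T(B)$ enters (via \cite{Sat1}) to give a unital embedding of $M_N$ into $M\cap B'$, which is then lifted by Theorem \ref{ThmKR} --- this lifting is the only role Theorem \ref{ThmKR} plays --- to split $f_n$ into $N$ pairwise orthogonal positive contractions $(f_{l,n})_n\in\cB$, each of definite trace. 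Strict comparison is applied to each piece, giving $r_{l,n}$ with $r_{l,n}^*f_{l,n}^{1/2}\pi_n(a)^2f_{l,n}^{1/2}r_{l,n}\to e_n$, and the membership in $\cB$ is obtained not by correcting $r_{l,n}$ but by the ansatz $s_n=\sum_{l,i}\pi_n(d_ia)f_{l,n}^{1/2}r_{l,n}\pi_n(c_{l,i})$: the excision identity forces $s_n^*\pi_n(x)s_n\approx\pi_n(x)e_n$ to hold \emph{whatever} $r_{l,n}$ is, and this, combined with $s_n^*s_n\approx e_n$ and $(e_n)_n\in\cB$, yields the commutation after a diagonal argument. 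So your proposal is missing the excision/nuclearity engine that is the real proof, and it also misreads the role of the finite-dimensional compact extreme boundary: that hypothesis is needed for the matrix embedding into the central sequence algebra, not to make $\cM$ or $J$ ``tractable.''
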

\begin{proof}
One can prove this lemma in a similar fashion to the proof of \cite[Theorem 1.1]{MS1}. We sketch a proof for the reader's convenience. In the proof we write $x\approx_{\varepsilon} y$ if $\left\lVert x - y\right\rVert < \varepsilon $ for $x$, $y$ in a $\mathrm{C}^*$-algebra.

Let $F$ be a finite subset of contractions in $A$ and $\varepsilon >0$. Since $A$ is nuclear there are two completely positive contractions $\rho: A\rightarrow M_N$ and $\sigma: M_N \rightarrow A$ such that $\sigma\circ\rho (x) \approx_{\varepsilon/2} x$ for $x\in F$. Since $A$ is unital simple infinite-dimensional, applying Voiculescu's theorem to $\rho$ we can obtain $d_i\in A$, $i=1,2,\dots,N$ such that $\rho (x) \approx_{\varepsilon/2} [\varphi(d_i^* x d_j)]_{i,j}$ for some pure state $\varphi$ of $A$ and for all $x\in F$. Identifying $\sigma$ with a positive element of $M_N(A)$ we have $c_{l,i}\in A$, $l,i=1,2,\dots,N$ such that $\displaystyle  \sigma (e_{i,j}) =\sum_{l=1}^N c_{l,i}^*c_{l,j}$ for the standard matrix units $\{ e_{i,j}\}_{i,j}$ of $M_N$. 
Thus it follows that 
\[\sum_{l=1}^N\sum_{i,j=1}^N \varphi(d_i^*xd_j)\pi (c_{l,i}^*c_{l,j}) \approx_{\varepsilon} \pi(x)\quad {\rm for}\ x\in F.\]

Due to the Akemann-Anderson-Pedersen excision theorem, there exists a positive contraction $a\in A$ such that $ad_i^*xd_j a\approx_{\varepsilon/N^2} \varphi(d_i^*xd_j)a^2$ for $x\in F$ and $\varphi(a)=1$. Because $T(B)$ has compact extreme boundary with finite topological dimension, by \cite[Theorem 1.1]{Sat1} (see also \cite{KR, TWW}), 
we obtain a unital embedding of $M_N$ into $M\cap B'$. By Theorem \ref{ThmKR} we obtain a sequence 
of completely positive order zero maps $\psi_n:M_N\to B$ 
such that $\min_{\tau\in T(B)}\tau(\psi_n(1)^m)\to1$ for any $m\in\N$ and 
$\lVert[\psi_n(x),b]\rVert\to0$ as $n\to\omega$ 
for any $x\in M_N$ and $b\in B$. 
Since $A$ is separable, we may further assume 
\[
\lim_{n\to\omega}\lVert[\psi_n(x),\pi_n(y)]\rVert=0,\quad 
\lim_{n\to\omega}\lVert[\psi_n(x),f_n]\rVert=0
\]
for any $x\in M_N$ and $y\in A$. 
Then a similar argument as in the proof of \cite[Lemma 3.4]{MS1} implies
 that there exist sequences $(f_{l,n})_n$, $l=1,2,\dots,N$ of positive contractions in $B$ such that $(f_{l,n})_n\in\cB$, $(f_{l,n}f_n)_n= (f_{l,n})_n$, $(f_{l,n}f_{l',n})_n=0$ for $l\neq l'$ in $B^{\omega}$, and 
\[
 \lim_{n\to\omega} \min_{\tau\in T(B)} \tau (f_{l,n}^m) = \frac{1}{N} \lim_{n\to\omega} \min_{\tau\in T(B)} \tau(f_n^m)\quad{\rm for\ any\ } m\in\N.
\]
Since $A$ is unital and simple, there exist $v_i\in A$, $i=1,2,\dots,k$ such that $\displaystyle \sum_{i=1}^k v_i^* a^2 v_i =1$. Set $ \alpha = \left(\sum_{i=1}^k \left\lVert v_i\right\rVert^2 \right)^{-1} >0$. By $(f_{l,n})_n\in\cB$ we see that for any $m\in\N$
\[\lim_{n\to \omega}\min_{\tau\in T(B)} \tau( f_{l,n}^{m/2} \pi_n (a)^2 f_{l,n}^{m/2} )\geq \alpha \lim_{n\to \omega} \min_{\tau\in T(B)} \tau(f_{l,n}^m) >0.\]
As $\displaystyle\lim_{m\to\infty} \lim_{n\to\omega} \min_{\tau\in T(B)}\tau(f_{l,n}^m) >0$, $\left\lVert a\right\rVert =1$ and $B$ has strict comparison,  there exist sequences $r_{l,n}\in B$, $l=1,2,\dots,N$, $n\in\N$ such that 
\[ \lim_{n\to\omega} \left\lVert r_{l,n}^* f_{l,n}^{1/2} \pi_n (a)^2 f_{l,n}^{1/2} r_{l,n} - e_n\right\rVert =0, \quad \lim_{n\to\omega} \left\lVert r_{l,n}\right\rVert=\lim_{n\to\omega}\left\lVert e_n\right\rVert^{1/2} \leq 1. \]

Define 
\[s_n = \sum_{l=1}^N\sum_{i=1}^N \pi_n (d_i a) f_{l,n}^{1/2} r_{l,n} \pi_n (c_{l,i})\in B, \quad {\rm for\ }n\in\N.\]
Then we have $(f_ns_n)_n = (s_n)_n$ in $B^{\omega}$. And there exists a neighborhood $\Omega\in \omega$ such that for $x\in F$ and $n\in \Omega$ 
\begin{align*}
s_n^* \pi_n(x) s_n &\approx \sum_{l=1}^N\sum_{i,j=1}^N \pi_n (c_{l,i}^*) r_{l,n}^* f_{l,n}^{1/2} \pi_n (ad_i^* xd_j a) f_{l,n}^{1/2} r_{l,n} \pi_n(c_{l,j})\\
&\approx_{\varepsilon} \sum_l \sum_{i,j} \varphi (d_i^*x d_j) \pi_n (c_{l,i}^*)r_{l,n}^* f_{l,n}^{1/2} \pi_n (a^2) f_{l,n}^{1/2} r_{l,n} \pi_n (c_{l,j})\\
&\approx \sum_l \sum_{i,j} \varphi (d_i^*xd_j) \pi_n (c_{l,i}^*c_{l,j}) e_n \approx_{\varepsilon} \pi_n (x) e_n.
\end{align*}

Since $F\subset A$ and $\varepsilon > 0$ are arbitrary,  $A$ is separable, and $\pi$ is unital, we can find another sequence $s_n$, $n\in\N$ of $B$ such that $(f_ns_n)_n=(s_n)_n$, $(s_n^*s_n)_n=(e_n)_n$ in $B^{\omega}$ and $(s_n)_n \in\cB$.
\end{proof}  

\begin{proposition}\label{PropStriComp}
Suppose that $B$ has finitely many extremal tracial states. Then the following holds.
\begin{enumerate}
\item The map $T(\mathcal{M})\to T(\mathcal{B})$ 
induced by $\Phi:\mathcal{B}\to\mathcal{M}$ is surjective. 
\item $\cB$ has strict comparison for projections.
\end{enumerate}
\end{proposition}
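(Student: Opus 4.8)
The plan is to prove (i) first and feed it into (ii). Throughout I use that $M$ is a finite direct sum of $\mathrm{II}_1$-factors (as $B$ has finitely many extremal traces), so that $\cM=M\cap\Phi(\pi(A))'$ is a finite von Neumann algebra, and that $T(\cB)$ is weak${}^*$-compact since $\cB$ is unital. For (ii) I first reduce to $k=1$: replacing $B$ by $M_k(B)$ preserves every standing hypothesis and identifies $M_k(\cB)$ with the relative commutant of $(\pi\otimes\id)(A)$ in $M_k(B)^{\omega}$, so it suffices to compare projections lying in $\cB$ itself.

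For (i), surjectivity of $\Phi|_{\cB}$ (Theorem \ref{ThmKR}) gives $\cM\cong\cB/(\cB\cap J)$, so the induced map $T(\cM)\to T(\cB)$ is onto exactly when every $\tau\in T(\cB)$ annihilates $\cB\cap J$; it is enough to show $\tau(e)=0$ for a positive contraction $e=(e_n)_n\in\cB\cap J$. For each $N$ I would use the completely positive order zero maps $\psi_n\colon M_N\to B$ furnished in the proof of Lemma \ref{LemSI}: their diagonal images $f_j=(\psi_n(e_{jj}))_n$, $j=1,\dots,N$, are pairwise (norm-)orthogonal full positive contractions in $\cB$ with $\lim_m\lim_{n\to\omega}\min_{\tau\in T(B)}\tau(f_{j,n}^m)=1/N$. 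Applying property (SI) to each pair $(e,f_j)$ yields $s_j\in\cB$ with $s_j^*s_j=e$ and $f_js_j=s_j$; orthogonality of the $f_j$ makes the $s_js_j^*$ pairwise orthogonal with $\sum_j s_js_j^*\le 1$, whence $N\tau(e)=\sum_j\tau(s_j^*s_j)=\tau(\sum_j s_js_j^*)\le 1$. Letting $N\to\infty$ gives $\tau(e)=0$.

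For (ii), let $p,q$ be projections in $\cB$ with $\tau(p)<\tau(q)$ for all $\tau\in T(\cB)$. Compactness of $T(\cB)$ yields $\delta>0$ with $\tau(q)-\tau(p)\ge\delta$, and the identification $T(\cM)\cong T(\cB)$ from (i) turns this into $\sigma(\Phi(q))-\sigma(\Phi(p))\ge\delta$ for all $\sigma\in T(\cM)$. Comparison through the center-valued trace of the finite von Neumann algebra $\cM$ then produces a partial isometry $w\in\cM$ with $w^*w=\Phi(p)$, $ww^*\le\Phi(q)$ and full leftover $r:=\Phi(q)-ww^*$ (indeed $\sigma(r)\ge\delta$). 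I would lift $w$ to a contraction $b\in\cB$ by Theorem \ref{ThmKR}, replace it by $qbp$ so that $qb=b=bp$ and $b^*b\le p$, and set $e:=p-b^*b\in\cB\cap J$. The plan is then to fill in the trace-negligible defect $e$ using (SI): applied to $e$ and a full lift of $r$ it produces $s\in\cB$ with $s^*s=e$, and the candidate partial isometry is $v:=b+s$, so that $v^*v=p+(b^*s+s^*b)$.

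The hard part is that $v$ is a priori only a partial isometry modulo the trace-kernel ideal: the defect $b^*s+s^*b$ is killed by every trace (by (i)) but need not vanish in norm, because $r\perp ww^*$ holds only after applying $\Phi$. I would remove it by lifting this orthogonality into $\cB$. Starting from any contraction $g_0\in\cB$, $g_0\ge 0$, with $\Phi(g_0)=r$ and $g_0\le q$, note $\Phi(g_0\,bb^*)=r\,ww^*=0$, so the overlap of $g_0$ with $bb^*$ is trace-negligible; cutting $g_{0,n}$ by $1-h_{\eta}(b_nb_n^*)$ for a continuous $h_{\eta}$ supported near $\{t\ge\eta\}$ and letting $\eta\to0$ along a diagonal sequence yields $\tilde g\in\cB$, $\tilde g\ge 0$, with $\tilde g\le q$, $\tilde g\,bb^*=0$ in $\cB$, and the \emph{same} fullness as $g_0$ (the discarded mass is controlled by $\|g_{0,n}^{1/2}b_nb_n^*g_{0,n}^{1/2}\|_2\to0$). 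Running (SI) with receiver $\tilde g$ then gives $s$ with $\tilde g s=s$, so $b^*s=b^*\tilde g s=0$; the cross terms vanish, $v^*v=p$, and $vv^*=bb^*+ss^*\le q$ exactly. I expect this orthogonalization---discarding the overlap while preserving enough fullness to keep (SI) applicable---to be the main obstacle; it is carried out by the kind of reindexing estimate used in Theorem \ref{ThmKR} and in \cite[Theorem 1.1]{MS1}.
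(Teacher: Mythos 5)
Your proposal is correct and shares the paper's overall strategy---(i) by showing every trace on $\cB$ kills $\cB\cap\ker\Phi$ via property (SI), and (ii) by comparing $\Phi(p)$ with $\Phi(q)$ in the finite von Neumann algebra $\cM$, lifting through Theorem \ref{ThmKR}, and repairing the trace-null defect with (SI)---but both halves are implemented differently, and in (ii) your route is more laborious than necessary. In (i), where you build $N$ pairwise orthogonal full receivers $(\psi_n(e_{jj}))_n$ from an order zero map and apply (SI) in parallel to get $N\tau(e)\le 1$, the paper instead iterates (SI): having found $s_1,\dots,s_k$ with $s_i^*s_i=x$ and $x+\sum_{i\le k}s_is_i^*\le 1$, it applies (SI) again with receiver $1-x-\sum_{i\le k}s_is_i^*$ (which equals $1$ modulo $J$, hence is automatically full); this needs no order zero partition of unity, and both arguments end with $m\tau(x)\le 1$ for all $m$. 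In (ii), the point you single out as ``the hard part''---that the cross terms $b^*s+s^*b$ need not vanish in norm---is actually a non-issue with the paper's choice of receiver: it sets $w=qrp$ (so $qw=w$ and $ww^*\le q$), applies (SI) directly to $e=p-w^*w$ and $f=q-ww^*$ (fullness of $f$ holds because $\Phi(f)$ is a projection of strictly positive trace and $T(B)$ has finitely many extreme points, so the $m$-th power condition is uniform), and then $s=(q-ww^*)s$ forces $w^*s=0$ by pure algebra: expanding $w^*s=w^*(q-ww^*)s=w^*s-w^*ww^*s$ gives $w^*ww^*s=0$, hence $\lVert ww^*s\rVert^2=\lVert s^*w(w^*ww^*s)\rVert=0$, hence $\lVert w^*s\rVert^2=\lVert s^*(ww^*s)\rVert=0$. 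So $v=w+s$ satisfies $v^*v=p$ and $qv=v$ with no separate lifting of the leftover projection $r=\Phi(q)-\varr\varr^*$ and no orthogonalization. Your orthogonalization scheme can be made to work---note that $\Phi$ of your cut-down lift is still $r$, since $\Phi(h_\eta(bb^*))=h_\eta(ww^*)=ww^*$ annihilates $r$, so fullness survives by the same projection-plus-finitely-many-extremal-traces observation---but it costs an extra $\varepsilon$-test/reindexing argument that the paper's choice of $f$ renders superfluous.
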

\begin{proof}[Proof of {\rm (i)}]
It suffices to show $\tau(x)=0$ 
for any $\tau\in T(\mathcal{B})$ and $x\in\mathcal{B}\cap\ker\Phi$. 
We may assume that $x$ is a positive contraction. 
Let $(x_n)_n$ be a representative sequence of $x$ 
consisting of positive contractions. 
We have 
\[
\lim_{n\to\omega}\min_{\tau\in T(B)}\tau((1-x_n)^m)=1
\]
for any $m\in\N$. 
By Lemma \ref{LemSI}, $B$ has property (SI) relative to $\pi(A)$. 
It follows that there exists $s_1\in\mathcal{B}\cap\ker\Phi$ 
such that $s_1^*s_1=x$ and $(1-x)s_1=s_1$. 
Clearly $x+s_1s_1^*$ is a positive contraction in $\mathcal{B}\cap\ker\Phi$. 
Hence, in the same way we obtain $s_2\in\mathcal{B}\cap\ker\Phi$ 
such that $s_2^*s_2=x$ and $(1-x-s_1s_1^*)s_2=s_2$. 
Then $x+s_1s_1^*+s_2s_2^*$ is again 
a positive contraction in $\mathcal{B}\cap\ker\Phi$. 
Repeating this argument, 
one can find $s_i\in\mathcal{B}\cap\ker\Phi$, $i\in\N$ satisfying 
\[
s_i^*s_i=x\quad\text{and}\quad 
x+s_1s_1^*+s_2s_2^*+\dots+s_ms_m^*\leq1
\]
for every $m\in\N$. 
Hence $\tau(x)=0$ for all $\tau\in T(\mathcal{B})$. 
\end{proof}
\begin{proof}[Proof of {\rm (ii)}]
Let $\pi^{(n)}$ be the amplification map of $\pi$ defined by $\pi^{(n)}(a) = \pi(a)\otimes 1_n\in B^{\omega}\otimes M_n \cong (B\otimes M_n)^{\omega}$, $n\in\N$, $a\in A$. 
By the canonical identification we have 
$\cB\otimes M_n \cong (B\otimes M_n)^{\omega}\cap\pi^{(n)} (A)'$ for any $n\in\N$. Then it suffices to consider two projections in $\cB$.

Assume that two projections $p$ and $q$
in $\cB$ satisfy $\tau(p)< \tau(q)$ for any $\tau\in T(\cB)$. This implies $\vartau\circ\Phi (p) < \vartau\circ\Phi(q)$ for any $\vartau\in T(\cM)$. Because $\cM$ is a finite von Neumann algebra, there exists $\varr\in\cM$ such that $\varr^*\varr = \Phi(p)$ and $\varr\varr^* \leq\Phi(q)$. By Theorem \ref{ThmKR} we obtain $r\in\cB$ such that $\Phi(r)=\varr$ and $\lVert r\rVert\leq 1$. Set $w=qrp\in\cB$ and let $q_n$, $w_n\in B$, $n\in\N$ are representatives of $q$ and $w$. Since $q$ is a projection and $q\geq ww^*$, we may assume that $q_n$, $n\in\N$ are projections and $q_n \geq w_n w_n^*$ for any $n\in\N$. Since $\tau(q-ww^*)\geq \tau(q-p)> 0$ for any $\tau \in T(\cB)$ it follows that 
\[ \lim_{n\to\omega} \tau'(q_n - w_nw_n^*) >0 \quad{\rm for\ any\ }\tau'\in T(B).\] 
Because $\Phi(q-ww^*)^m=\Phi(q-ww^*)$ for any $m\in\N$ and $B$ has finitely many extremal traces, we have
\[ \lim_{n\to\omega}\min_{\tau'\in T(B)} \tau'((q_n-w_nw_n^*)^m) =\lim_{n\to\omega}\min_{\tau'\in T(B)} \tau' (q_n -w_nw_n^*) >0,\]
for any $m\in\N$. By Lemma \ref{LemSI} and $p- w^*w\in \cB\cap\ker\Phi$, there exists $s\in \cB\cap \ker\Phi$ such that $s^*s=p-w^*w$ and $(q-ww^*)s=s$. We define $v=w+s$, then we conclude that
$v^*v=w^*w+ s^*s =p$ and $qv=v$.
 This completes the proof. 
\end{proof}

\section{ Decomposition rank }\label{Sec4}
We denote by $\cU$ the universal UHF-algebra. 
In this section, we show that certain $\cU$-absorbing $\mathrm{C}^*$-algebras have finite decomposition rank. First let us recall the definition of decomposition rank.
\begin{definition}[Kirchberg-Winter \cite{KW,Win1}]\label{DefDec}
A separable $\mathrm{C}^*$-algebra $A$ is said to have {\it decomposition rank at most} $d\in\N\cup\{0\}$ if there exist completely positive contractions $\varphi_n:A\rightarrow \bigoplus_{i=0}^d F_{i,n}$, $n\in\N$ and $\psi_{i,n}: F_{i,n} \rightarrow A$, $i=0,1,\dots,d$, $n\in\N$, where $F_{i,n}$ are finite-dimensional $\mathrm{C}^*$-algebras, such that $\psi_{i,n}$ are order zero (disjointness preserving), $\sum_{i=0}^d\psi_{i,n}: \bigoplus_{i=0}^d F_{i,n}\rightarrow A$ is contractive, and 
\[ \lim_{n\to\infty} \left\lVert \left(\sum_{i=0}^d \psi_{i,n}\right)\circ \varphi_n(a)-a \right\rVert =0\quad{\rm for\ any\ }a\in A,\]
where we simply write $\sum_{i=0}^d\psi_{i,n}(\bigoplus_{i=0}^dx_i)=\sum_{i=0}^d\psi_{i,n}(x_i)$ for $x_i\in F_{i,n}$.
\end{definition}

The proof of the following theorem is based on the crucial technique which was developed by A. Connes and U. Haagerup in the proof of \cite[Theorem 3.1, (d)$\Rightarrow$(a)]{Con3} and \cite[Theorem 4.2]{Haa}. 
\begin{theorem}\label{ThmU}
Let $A$ be a unital separable simple nuclear $\mathrm{C}^*$-algebra with a unique tracial state. If $A$ is quasidiagonal then the decomposition rank of $A\otimes\cU$ is at most one.
\end{theorem}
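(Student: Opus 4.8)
The plan is to verify Definition \ref{DefDec} with $d=1$ for $D:=A\otimes\cU$, i.e.\ to produce, for each finite $F\subset D$ and $\varepsilon>0$, a completely positive contraction $\varphi\colon D\to F_0\oplus F_1$ into finite-dimensional algebras together with two order zero maps $\psi_0,\psi_1$ whose sum is contractive and satisfies $(\psi_0+\psi_1)\circ\varphi\approx_\varepsilon\mathrm{id}$ on $F$. Since $\cU=\overline{\bigcup_m M_m}$ and a finite-dimensional factor $M_m$ can be absorbed into the target finite-dimensional algebras, I would first reduce to approximating the embedding $A\ni a\mapsto a\otimes1\in D$ on finite subsets of $A$; the ambient $\cU$ is then free to supply both an auxiliary matrix tensor factor $M_n$ and room in which to realise matrix units. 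The first genuine step is the finite-dimensional approximation coming from quasidiagonality: because $A$ is nuclear and quasidiagonal with the single trace $\tau$, for each $F$ and $\varepsilon$ there is a completely positive contraction $\rho\colon A\to M_k$ that is $\varepsilon$-multiplicative on $F$ and asymptotically trace preserving, $|\tr_k(\rho(a))-\tau(a)|<\varepsilon$ for $a\in F$ (any weak-$*$ limit of $\tr_k\circ\rho$ is a trace, so uniqueness forces the matricial traces to converge to $\tau$). This $\rho$, suitably amplified, will be the first leg $\varphi$ of the decomposition.

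The heart of the argument is the construction of the second leg: two order zero maps from $M_k$ (really $M_k\otimes M_n$) into $D$ that approximately invert $\rho$, in the sense that $\sum_{i,j}\rho(a)_{ij}\,W_{ij}\approx a\otimes1$ for a system $\{W_{ij}\}$ of approximate matrix units. Here I would follow the Connes--Haagerup scheme: approximate multiplicativity and trace-invariance of $\rho$ make the images of the diagonal matrix units into an approximately orthogonal family of positive elements of trace $\approx1/k$, which after cutting into a matrix subalgebra of $\cU$ I may replace by genuine projections $p_1,\dots,p_k$ of equal trace. The off-diagonal matrix units then amount to Murray--von Neumann equivalences $p_i\sim p_j$, which in the C$^*$-setting are not available on the nose; this is exactly where Lemma \ref{FunLem} enters. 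Applying it to each such pair inside $D\otimes M_n$ yields, for every equivalence, not one partial isometry but two contractions $v_0,v_1$ whose $v_i^*v_i$ and $v_iv_i^*$ reproduce $p\otimes1_n$ and $q\otimes1_n$ up to $O(1/n)$ and are supported on $p$, respectively $q$, in the $A$-direction. Organising the $v_0$'s into one map and the $v_1$'s into the other produces precisely two maps $\psi_0,\psi_1\colon M_k\otimes M_n\to D$; the staircase weights $f=\sum_j\frac{n-j}{n}e_j$ and $1_n-f$ of Lemma \ref{FunLem} are what make each $\psi_i$ order zero and, crucially, make $\psi_0+\psi_1$ contractive rather than merely bounded. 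The two colours $v_0,v_1$ are thus the source of the bound $d=1$.

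Finally I would assemble $\varphi=(\rho\text{ amplified by }M_n)$ and $\psi_0+\psi_1$, and estimate $\lVert(\psi_0+\psi_1)\circ\varphi(a)-a\otimes1\rVert$ by letting first $k\to\infty$ (so that $\rho$ becomes multiplicative and trace preserving in the limit) and then $n\to\infty$ (so that the $O(1/n)$ errors of Lemma \ref{FunLem} vanish); a diagonal sequence then gives the required maps for $D$. The main obstacle I anticipate is precisely this middle step: verifying simultaneously that the maps built from the two contractions are order zero, that their sum is a contraction, and that they reconstruct $a\otimes1$ to within $\varepsilon$. The order-zero relations and the contractivity pull in opposite directions --- honest matrix units would give a genuine embedding (decomposition rank $0$) but cannot exist in general, so the $O(1/n)$ slack and the two-fold colouring must be balanced against each other --- and it is here that the trace estimate from quasidiagonality and the support condition $\dist(v_i^*v_i,\{p\otimes x\})\le2/n$ of Lemma \ref{FunLem} have to be combined. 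Controlling the cross terms between the two colours, and ensuring the reconstruction error does not accumulate over the $k^2$ matrix entries, is the delicate bookkeeping that the proof must carry out.
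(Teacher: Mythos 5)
Your proposal correctly isolates two ingredients of the paper's argument --- the quasidiagonal matrix models $\rho_n\colon A\to M_{k_n}$ as the first leg, and the two contractions of Lemma \ref{FunLem} as the source of the bound $d=1$ --- but the heart of your plan has a genuine gap, and the place where you apply Lemma \ref{FunLem} is wrong. You need matrix units $W_{ij}\in A\otimes\cU$ with $\sum_{i,j}\rho(a)_{ij}W_{ij}\approx a\otimes 1$, and you propose to get them from projections $p_1,\dots,p_k$ of equal trace ``in a matrix subalgebra of $\cU$'' together with equivalences between them. This fails twice over. First, projections of equal trace in (a matrix subalgebra of) the UHF algebra $\cU$ are honestly Murray--von Neumann equivalent, so Lemma \ref{FunLem} buys nothing there. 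Second, and fatally, any system of approximate matrix units built inside $1_A\otimes\cU$ stays inside $1_A\otimes\cU$, so $\sum_{i,j}\rho(a)_{ij}W_{ij}\in 1_A\otimes\cU$; but no element $1_A\otimes x$ can be close to $a\otimes 1_\cU$ unless $a$ is nearly scalar (test against $\phi_1\otimes\psi$ and $\phi_2\otimes\psi$ for states $\phi_1,\phi_2$ of $A$ with $|\phi_1(a)-\phi_2(a)|$ large). The $W_{ij}$ must genuinely involve the $A$-leg: they have to come from elements \emph{intertwining} the abstract model $\varphi_n=\sigma_n\circ\rho_n\colon A\to 1_A\otimes\cU$ with the identity copy $\iota(a)=a\otimes 1$, and nothing in your outline produces such intertwiners; approximate trace preservation of $\rho$ is far too weak.

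The paper produces them in two stages, both absent from your sketch. First, since $A\otimes A$ is nuclear with unique trace, its GNS weak closure is the injective, hence hyperfinite, II$_1$ factor, so by Connes the flip automorphism of $A\otimes A$ is approximately inner in $\|\cdot\|_{2,\tau}$; pushing the flip unitaries through the product map $\iota\times\varphi$ (defined via nuclearity, as $\iota(A)$ and $\varphi(A)$ commute) yields unitaries $u_n\in A\otimes\cU_0$ with $\|u_n\varphi_n(a)u_n^*-\iota(a)\|_{2,\tau}\to 0$. This is where the unique trace and nuclearity do real work, and it is only a trace-norm statement. Second, this is upgraded to a norm statement: with $\pi(a)$ the diagonal embedding with entries $\varphi_n(a)$ and $\iota(a)$ into $\cA_0=(A\otimes\cU_0\otimes M_2)^\omega$, the off-diagonal element built from the $u_n$ shows that the two corner projections $p,q$ agree on every trace of the relative commutant $\cA_0\cap\pi(A)'$, and the machinery of Section \ref{Sec3} (Theorem \ref{ThmKR}, relative property (SI) of Lemma \ref{LemSI}, Proposition \ref{PropStriComp}) shows that this relative commutant has strict comparison for projections. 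Lemma \ref{FunLem} is then applied to $p$ and $q$ \emph{in that relative commutant}: an honest partial isometry there would conjugate $\varphi_n$ exactly onto $\iota$ (forcing decomposition rank $0$), and the two contractions are the order zero substitute. Finally, the order-zero property of $\Ad v_{i,n}\circ\sigma_n$ does not come from the staircase weights; it comes from the distance condition of Lemma \ref{FunLem} together with Lemma \ref{LemDisPre} (stable rank one), which let one arrange $|v_{i,n}|\in 1_A\otimes\cU_1$, commuting with $\sigma_n(M_{k_n})\subset\cU_0$ --- this is exactly why the paper works with two commuting copies $\cU_0,\cU_1$ of $\cU$. Without the flip step and the Section \ref{Sec3} machinery, your construction cannot get started, and with your placement of Lemma \ref{FunLem} the reconstruction formula is identically false.
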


\begin{lemma}\label{LemDisPre}
Let $A$ be a unital simple stably finite $\mathrm{C}^*$-algebra which absorbs a UHF-algebra tensorially, $A_1$ a $\mathrm{C}^*$-subalgebra of $A$, and $\omega$ a free ultrafilter on $\N$. Suppose that $v_n$, $n\in\N$ are contractions in $A$ such that 
\[ (|v_n|)_n \in A_1^{\omega}\subset A^{\omega}.\]
Then there exists a sequence $(\tilde v_n)_n$ of contractions in $A$ such that 
$|\tilde v_n|\in A_1$ for any $n\in\N$ and 
\[
(\tilde v_n)_n=(v_n)_n\quad {\rm in\ } A^{\omega}.
\]
\end{lemma}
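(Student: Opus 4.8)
The plan is to use the polar decomposition together with UHF-absorption to adjust each $v_n$ so that its absolute value lands exactly in $A_1$, without moving the element in the ultrapower. Write $v_n = u_n |v_n|$ for the polar decomposition in $A^{**}$; the point is that $|v_n| = (v_n^*v_n)^{1/2}$ already has a representative sequence asymptotically in $A_1$, so the obstruction is purely the partial-isometry part $u_n$, which need not lie in $A$. The first step I would take is to reduce to a finite approximation: fix a free ultrafilter neighborhood and observe that since $(|v_n|)_n \in A_1^{\omega}$, there are positive contractions $h_n \in A_1$ with $\lim_{n\to\omega}\||v_n|-h_n\|=0$. Replacing $|v_n|$ by $h_n$ changes $(v_n)_n$ by nothing in $A^{\omega}$, so the task becomes to produce contractions $\tilde v_n$ with $|\tilde v_n| = h_n \in A_1$ and $(\tilde v_n)_n = (v_n)_n$.

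Next I would exploit the tensorial UHF-absorption $A \cong A\otimes M_k$ to build room for a genuine partial isometry. The idea is that in a UHF-stable algebra one can functionally perturb $v_n$ so that its range and source projections become honest projections and the polar part becomes an element of $A$ itself. Concretely, for $\varepsilon>0$ consider the continuous functional calculus cutoff $f_\varepsilon(|v_n|)$ and write $v_n f_\varepsilon(|v_n|)^{-1}\cdot f_\varepsilon(|v_n|)$ on the spectral part bounded away from zero; on that part the polar decomposition already lives in $A$. The trouble is the spectral mass of $|v_n|$ near zero, and here is where stable finiteness and UHF-absorption enter: I would use that for a UHF-absorbing stably finite $A$ any contraction can be approximately factored through a partial isometry in $A$ whose initial space dominates the support of $h_n$. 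The partial isometry is obtained by the standard trick of tensoring with $M_2$ and rotating, arranging $w_n \in A$ with $w_n^* w_n = $ a spectral projection of $h_n$ close to its support, then setting $\tilde v_n = w_n h_n$ so that $|\tilde v_n| = h_n \bigl(w_n^* w_n\bigr)^{1/2}\cdot(\cdots)$, which I can force to equal $h_n$ on the relevant spectral subspace.

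The step I expect to be the main obstacle is controlling the behavior of $|v_n|$ near $0$ in the spectrum, since that is precisely where the polar-decomposition partial isometry fails to be an element of $A$ and where one cannot simply invert. The resolution should come from noting that the \emph{small} spectral part of $h_n$ contributes negligibly in $A^{\omega}$-norm only if its norm contribution vanishes along $\omega$, which is not automatic — so instead I would split $h_n = h_n^{(\ge\varepsilon)} + h_n^{(<\varepsilon)}$ and handle the two pieces separately: on $h_n^{(\ge\varepsilon)}$ the polar part is in $A$ by continuous functional calculus (division by an invertible element of the corner), and for $h_n^{(<\varepsilon)}$ I would absorb it using an auxiliary partial isometry manufactured from UHF-absorption, glued to the first piece via orthogonality of the spectral projections. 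Since $\varepsilon$ is arbitrary and $A$ is separable, a standard diagonal argument over $\omega$ then yields a single sequence $(\tilde v_n)_n$ with $|\tilde v_n|\in A_1$ and $(\tilde v_n)_n=(v_n)_n$ in $A^{\omega}$, completing the proof.
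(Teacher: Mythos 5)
Your opening moves coincide with the paper's: replace $|v_n|$ by positive contractions $h_n\in A_1$ with $(h_n)_n=(|v_n|)_n$ in the ultrapower, then try to multiply $h_n$ on the left by a ``polar part'' lying in $A$ so that the product still represents $(v_n)_n$. But there is a genuine gap in how you manufacture that polar part, and it is exactly the crux of the lemma. You propose a partial isometry $w_n\in A$ whose initial projection is ``a spectral projection of $h_n$ close to its support,'' and then $\tilde v_n=w_nh_n$. This fails on two counts. First, spectral projections of $h_n$ (or of $|v_n|$) are in general not elements of $A$, let alone of $A_1$: no real rank zero is assumed, so there is nothing in the algebra to glue along, and the ``tensoring with $M_2$ and rotating'' trick produces elements of $M_2(A)$ rather than the required $w_n\in A$; you never actually explain how UHF-absorption yields $w_n$. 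Second, and more fundamentally, the conclusion $|\tilde v_n|\in A_1$ is an \emph{exact} membership statement: $|w_nh_n|=(h_nw_n^*w_nh_n)^{1/2}$ equals $h_n$ only if $w_n^*w_nh_n=h_n$, i.e.\ only if the initial projection dominates the support projection of $h_n$ exactly. If instead $w_n^*w_n$ is a spectral projection $e=1_{[\varepsilon,\infty)}(h_n)$ merely ``close to'' the support, then $|w_nh_n|=h_ne$, which is a \emph{discontinuous} function of $h_n$ and hence need not lie in $A_1$, nor even in $A$. Approximate control cannot repair this, since the statement demands $|\tilde v_n|\in A_1$ for every $n$.

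The missing idea is stable rank one. The hypotheses (unital, simple, stably finite, UHF-absorbing) are present precisely so that R{\o}rdam's theorem \cite[Corollary 6.6]{Ror0} applies: $A$ has stable rank one, so invertibles are dense. Pedersen's polar decomposition theorem \cite[Theorem 5]{Ped} then gives, for a decreasing sequence $\varepsilon_n\to0$ and $f_n(t)=\max\{0,t-\varepsilon_n\}$, \emph{unitaries} $u_n\in A$ with $\lVert u_nf_n(|v_n|)-v_n\rVert\leq\varepsilon_n$. Because $u_n$ is a unitary rather than a partial isometry, setting $\tilde v_n=u_nh_n$, where $(h_n)_n=(f_n(|v_n|))_n=(|v_n|)_n$ with $h_n\in A_1$, gives $|\tilde v_n|=(h_nu_n^*u_nh_n)^{1/2}=h_n\in A_1$ on the nose, and $(\tilde v_n)_n=(u_nf_n(|v_n|))_n=(v_n)_n$ in $A^\omega$. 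The difficulty you correctly locate at the bottom of the spectrum (where the polar partial isometry of $v_n$ escapes $A$) is real; in a non-finite algebra it is fatal, as the unilateral shift shows. It is dissolved here only by the density of invertibles, and without invoking that (or some equivalent), your spectral-splitting and gluing scheme cannot be completed.
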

\begin{proof}
Let $\varepsilon_n >0 $, $n\in\N$ be a decreasing sequence which converges to 0, and let $f_n(t) =\max\{ 0, t-\varepsilon_n\}$ for $t\in [0, \infty)$, $n\in\N$. M. R\o rdam showed that any unital simple stably finite UHF-absorbing $\mathrm{C}^*$-algebra has stable rank one \cite[Corollary 6.6]{Ror0}. Then, by G. K. Pedersen's polar decomposition theorem \cite[Theorem 5]{Ped}, there exists a sequence $u_n$, $n\in\N$ of unitaries in $A$ such that
\[ \left\lVert u_n f_n(|v_n|) - v_n \right\rVert \leq \varepsilon_n.\]
 By $(|v_n|)_n \in A_1^{\omega}$ there exists a sequence $h_n$, $n\in\N$ of positive contractions in $A_1$ such that $(h_n)_n = (|v_n|)_n = (f_n(|v_n|))_n$ in $A_1^{\omega}$. 
Define $\tilde v_n=u_nh_n$. 
Then $|\tilde v_n|=h_n\in A_1$ and $(\tilde v_n)_n=(u_nf_n(|v_n|))_n=(v_n)_n$ in $A^\omega$. 
\end{proof}

In what follows, we let $\Ad a(b)=aba^*$ for $a$, $b$ in a $\mathrm{C}^*$-algebra and denote by $\|a\|_{2,\tau}$ the 2-norm of $a$ associated with a tracial state $\tau$, i.e. $\|a\|_{2,\tau}= \tau(a^*a)^{1/2}$.

\begin{proof}[Proof of Theorem \ref{ThmU}]
Let $\iota$ be the canonical unital embedding of $A$ into $A\otimes \cU$. Since $A\otimes M_n$ also satisfies the assumptions in the theorem for any $n\in\N$, it suffices to construct completely positive contractions $\trho_n: A\rightarrow M_{k_n}\oplus M_{k_n}$, $n\in\N$ and $\tsigma_{i,n}: M_{k_n}\rightarrow A\otimes\cU$, $i=0,1$, $n\in\N$ such that $\tsigma_{i,n}$ are order zero, $\sum_{i=0,1}\tsigma_{i,n}: M_{k_n}\oplus M_{k_n} \rightarrow A\otimes \cU$ is contractive, and 
\[
\left\lVert \left(\sum_{i=0,1} \tsigma_{i,n}\right)\circ \trho_n (a) - \iota(a) \right\rVert\to0,\quad n\to \infty,\quad{\rm for\ any\ } a\in A.
\] 

We let $\cU_i$, $i=0,1$ be commuting unital $\mathrm{C}^*$-subalgebras of $\cU$ such that $\cU_0\cong\cU_1\cong\cU$ and $\mathrm{C}^*(\cU_0\cup \cU_1)=\cU$. We denote by $\tau_A$, $\tau_{\cU_i}$, $i=0,1$, and $\tau_{\cU}$ the unique tracial states of $A$, $\cU_i$, and $\cU$. Let $\omega$ be a free ultrafilter on $\N$. 

Since $A$ is quasidiagonal, there are unital completely positive maps $\rho_n: A\rightarrow M_{k_n}$, $n\in\N$ such that 
\[\lim_{n\to \infty} \left\lVert \rho_n (a)\rho_n (b) - \rho_n(ab) \right\rVert =0\quad{\rm and}\quad \lim_{n\to\infty} \left\lVert \rho_n (a)\right\rVert = \left\lVert a \right\rVert,
\] 
for any $a$, $b\in A$ (see \cite[Lemma 7.1.4]{BO} for example). Let $\sigma_n: M_{k_n} \rightarrow \cU_0$, $n\in\N$ be a sequence of unital embeddings. We define unital completely positive maps $\varphi_n : A\rightarrow \cU_0$, $n\in\N$ and $\varphi: A\rightarrow \cU_0 ^{\omega}$ by 
\[\varphi_n(a) = \sigma_n\circ\rho_n(a) \quad{\rm and}\quad \varphi(a)= (\varphi_n(a))_n,\]
for any $n\in\N$ and $a\in A$. Note that $\varphi$ is a unital embedding because $\rho_n$ are asymptotically multiplicative.

In what follows, we identify $A\otimes \cU_0$ with the set of constant sequences in $(A\otimes \cU_0)^{\omega}$ and $\cU_0$ with the unital $\mathrm{C}^*$-subalgebra $1_A\otimes\cU_0$ of $A\otimes\cU_0$. By this canonical unital embedding we also identify $\cU_0^{\omega}$ with the unital $\mathrm{C}^*$-subalgebra of $(A\otimes \cU_0)^{\omega}$. From these identifications it follows that $[\iota(a), \varphi(b)] =0 $ for $a$, $b\in A$. We define the product map $\iota\times \varphi: A\odot A\rightarrow (A\otimes \cU_0)^{\omega}$ by $(\iota\times \varphi) (a\otimes b) =\iota(a)\cdot \varphi(b)$ for $a$, $b\in A$, where $\odot$ means the algebraic tensor product. By the nuclearity of $A$ we can extend $\iota\times \varphi$ to $A\otimes A$. Since $\tau_A$ is the unique tracial state of $A$, $A\otimes A$ also has the unique tracial state $\tau_A\otimes\tau_A$. Then the strong closure of $A\otimes A$ in the GNS-representation, associated with $\tau_A\otimes \tau_A$, is the injective II$_1$-factor, and the flip automorphism on $A\otimes A$ is approximately inner in the strong topology \cite{Con3}, i.e., there exists a sequence $w_n$, $n\in\N$ of unitaries in $A\otimes A$ such that 
\[ 
\lim_{n\to\infty} \left\lVert w_n (a\otimes b) w_n^* -b\otimes a\right\rVert_{2, \tau_A\otimes \tau_A} =0, \]
for any $a$, $b\in A$. 
Set $U_n =(\iota\times \varphi)(w_n)\in (A\otimes \cU_0)^{\omega}$, $n\in\N$. We define the tracial state on $(A\otimes\cU_0)^\omega$ by $(\tau_A\otimes \tau_{\cU_0})_{\omega} ((x_n)_n)= \lim_{n\to\omega} (\tau_A\otimes \tau_{\cU_0}) (x_n)$ for $(x_n)_n \in (A\otimes \cU_0)^{\omega}$. Since $\tau_A\otimes \tau_A$ is a unique tracial state of $A\otimes A$ it follows that $(\tau_A\otimes \tau_{\cU_0})_{\omega}\circ(\iota\times\varphi) (x) =(\tau_A\otimes \tau_A) (x)$ for $x\in A\otimes A$, which implies that 
\[ 
\lim_{n\to \infty} \left\lVert U_n \varphi(a) U_n^* -\iota (a)\right\rVert_{2,(\tau_A\otimes \tau_{\cU_0})_{\omega}} =0,
\] 
for any $a\in A$. Since $A$ is separable, by the standard reindexation trick we obtain a sequence $u_n$, $n\in\N$ of unitaries in $A\otimes \cU_0$ such that 
\[
\lim_{n\to \omega} \left\lVert u_n\varphi_n (a) u_n^* - \iota(a) \right\rVert_{2, \tau_A\otimes \tau_{\cU_0}} =0\quad {\rm for \ any \ }a\in A.
\]

Set 
\begin{align*} 
\cA&= (A\otimes \cU\otimes M_2)^{\omega},\\  
\cA_0 &= (A\otimes \cU_0\otimes M_2)^{\omega} \subset \cA,\\
J_0&= \{ (x_n)_n \in \ell^{\infty} (A\otimes \cU_0\otimes M_2)\ |\ \lim_{n\to\omega} \left\lVert x_n \right\rVert_{2, \tau_A\otimes\tau_{\cU_0}\otimes \tr_2} =0\},\\
M_0 &= \ell^{\infty}(A\otimes \cU_0\otimes M_2)/ J_0,
\end{align*}    
and let $\Phi_0$ be the quotient map from $\cA_0$ to $M_0$. For any $a\in A$, we define
\begin{displaymath}
\pi(a)=\left(\left[
\begin{array}{cc}
\varphi_n(a) & 0 \\ 0 & \iota(a)
\end{array}
\right]\right)_n\in \cA_0,\quad \quad 
u=\left(\left[
\begin{array}{cc}
0 & 0 \\ u_n & 0
\end{array}
\right]\right)_n\in \cA_0.
\end{displaymath}
Note that $\pi$ is a unital embedding of $A$ into $\cA_0$. Because of $\lim_{n\to \omega} \left\lVert u_n\varphi_n(a)- \iota(a) u_n \right\rVert_{2, \tau_A\otimes \tau_{\cU_0}}$ $=0$ for any $a\in A$, we have 
\[ \Phi_0 (u)\in  M_0\cap\Phi_0(\pi(A))'.\]
We also define 
\begin{displaymath}
p=\left[
\begin{array}{cc}
1_{A\otimes\cU_0}& 0 \\ 0 & 0
\end{array}
\right]\in \cA_0,\quad \quad 
q=\left[
\begin{array}{cc}
0 & 0 \\ 0 & 1_{A\otimes\cU_0}
\end{array}
\right]\in \cA_0.
\end{displaymath}
Then $p$ and $q$ belong to $ \cA_0\cap\pi(A)'$. 
Since $u_n$ are unitaries it follows that 
\[ u^*u=p\quad{\rm and}\quad uu^* =q\quad{\rm in }\ \cA_0.\]
As $\Phi_0(u)$, $\Phi_0(p)$, and $\Phi_0(q)$ are in $ M_0\cap\Phi_0(\pi(A))'$ we have 
\begin{align*}
\vartau \circ\Phi_0 (p) &=\vartau \circ \Phi_0(q), \\
\intertext{for any tracial state $\vartau$ of $ M_0\cap\Phi_0(\pi(A))'$. 
Applying Proposition \ref{PropStriComp} (i) to $B=A\otimes\cU_0\otimes M_2$, we have }
\tau(p) &=\tau (q), \\
\end{align*}
for any tracial state $\tau$ of $ \cA_0\cap\pi(A)'$.

By (ii) of Proposition \ref{PropStriComp}, $ \cA_0\cap\pi(A)'$ has strict comparison for projections. Then by Lemma \ref{FunLem}, for each $n\in\N$ we obtain two contractions $\tw_{i,n}$, $i=0,1$ in $(\cA_0\cap\pi(A)')\otimes M_n$ such that 
\[
\left\lVert \sum_{i=0,1} \tw_{i,n}^*\tw_{i,n} - p\otimes 1_n \right\rVert \leq \frac{4}{n}, \quad
\left\lVert \sum_{i=0,1} \tw_{i,n}\tw_{i,n}^* - q\otimes 1_n \right\rVert \leq \frac{4}{n}, 
\]
\[ {\rm and}\quad \dist \left(\tw_{i,n}^*\tw_{i,n}, \left\{ p\otimes x\ |\ x\in M_n\right\}\right)\leq \frac{2}{n} \quad{\rm for\ }i=0,1.\] 
We may think of $M_n$ as a unital subalgebra of $\cU_1$, 
and so for each $n\in\N$ we get 
\[
\left\lVert \sum_{i=0,1} \tw_{i,n}^*\tw_{i,n} - p\right\rVert \leq \frac{4}{n}, \quad
\left\lVert \sum_{i=0,1} \tw_{i,n}\tw_{i,n}^* - q\right\rVert \leq \frac{4}{n}, 
\]
\[ {\rm and}\quad \dist\left(\tw_{i,n}^*\tw_{i,n}, p(1_A\otimes \cU_1 \otimes 1_2)^{\omega}\right)\leq \frac{2}{n} \quad{\rm for\ }i=0,1.\] 
Therefore one can find two sequences $(\tv_{i,n})_n$, $i=0,1$ of contractions in $A\otimes\cU\otimes M_2$ such that $(\tv_{i,n})_n\in  \cA\cap\pi(A)'$ for $i=0,1$,
\[ \left(
\sum_{i=0,1} \tv_{i,n}^* \tv_{i,n}\right)_n =p,\quad \quad \left( \sum_{i=0,1} \tv_{i,n}\tv_{i,n}^*\right)_n =q\quad{\rm in\ } \cA,  
\] 
\[
 {\rm and}\quad (\left\lvert \tv_{i,n}\right\rvert)_n\in p(1_A\otimes \cU_1 \otimes 1_2)^{\omega} \subset p\cA p,\quad{\rm for\ }i=0,1.
\] 
Hence there exist two sequences $(v_{i,n})_n$, $i=0,1$ of contractions in $A\otimes\cU$ such that 
\begin{displaymath}
\left(\left[
\begin{array}{cc}
0& 0 \\ v_{i,n} & 0
\end{array}
\right]\right)_n= (\tv_{i,n})_n \in \cA,\quad i=0,1,
\end{displaymath}
and
\[
 \left(\sum_{i=0,1}v_{i,n}^*v_{i,n} \right)_n =\left( \sum_{i=0,1}v_{i,n}v_{i,n}^*\right)_n = 1_{A\otimes\cU}\quad{\rm in \ } (A\otimes\cU)^{\omega}.
\] 
Because of $(\tv_{i,n})_n\in \cA\cap\pi(A)'$ we have 
\[
(v_{i,n}\varphi_n(a))_n= (\iota(a) v_{i,n})_n\quad {\rm in \ } (A\otimes\cU)^{\omega},
\]
for any $a\in A$. It follows from these conditions that for any $a\in A$
\begin{align*}
\iota(a)
&= \sum_{i=0,1} (v_{i,n}\varphi_n(a) v_{i,n}^*)_n \\
&= \sum_{i=0,1} (\Ad v_{i,n}\circ\sigma_n\circ\rho_n (a))_n\quad{\rm in\ } (A\otimes\cU)^{\omega}. 
\end{align*}

From $\left(\left\lvert \tv_{i,n}\right\rvert\right)_n\in p (1_A\otimes \cU_1\otimes 1_2)^{\omega}$ we have $\left(\left\lvert v_{i,n}\right\rvert\right)_n \in (1_A\otimes \cU_1)^{\omega}\subset (A\otimes \cU)^{\omega}$. By Lemma \ref{LemDisPre} we may assume that $\lvert v_{i,n}\rvert$ is in $1_A\otimes\cU_1$ for every $n\in\N$. 
For $i=0,1$ and $n\in\N$, we define an order zero completely positive contraction $\tsigma_{i,n}: M_{k_n} \rightarrow A\otimes \cU$, $n\in\N$ by $\tsigma_{i,n}= \Ad v_{i,n}\circ\sigma_n$ and simply write $\sum_{i=0,1}\tsigma_{i,n}(x_0\oplus x_1)=\sum_{i=0,1}\tsigma(x_i)$ for $x_i\in M_{k_n}$. Since $\iota$ and $\rho_n$ are unital it follows that 
\[
\lim_{n\to\omega} \left\lVert \sum_{i=0,1}\tsigma_{i,n} \right\rVert =1.
\]
By multiplying $\tsigma_{i,n}$ by suitable scalars, 
we may further assume that $\sum_{i=0,1}\tsigma_{i,n}$ are contractive. 
Define completely positive contractions $\trho_n$, $n\in\N$ from $A$ to $M_{k_n}\oplus M_{k_n}$ by $\trho_n(a) = \rho_n(a)\oplus \rho_n(a)$. Then we conclude that 
\[
\iota(a) =\left(\left( \sum_{i=0,1} \tsigma_{i,n}\right)\circ \trho_n (a)\right)_n\quad {\rm in \ } (A\otimes \cU)^{\omega},
\]
for any $a\in A$. This completes the proof.
\end{proof}

\section{$\mathcal{Z}$-absorbing $\mathrm{C}^*$-algebras}\label{Sec5}
In this section, we prove Theorem \ref{MainThm}. 

\begin{proof}[Proof of Theorem \ref{MainThm}]
By Theorem \cite[Theorem 1.1]{MS1}, 
$A$ is $\mathcal{Z}$-absorbing, i.e. $A\cong A\otimes\mathcal{Z}$. 
Let $F$ be a finite subset of contractions in $A$ and let $\varepsilon >0$. 
By Theorem \ref{ThmU}, decomposition rank of $A\otimes\cU$ is at most one, 
and so there exist completely positive contractions 
$\varphi:A\rightarrow E_0\oplus E_1$ and 
$\psi_i:E_i\rightarrow A\otimes\cU$, $i=0,1$ 
such that $E_i$ are finite dimensional $\mathrm{C}^*$-algebras, 
$\psi_i$ are order zero, $\sum_{i=0,1}\psi_i : E_0\oplus E_1\rightarrow A\otimes\cU$ is contractive and 
\[
\left\lVert\left(\sum_{i=0,1}\psi_i\right)\circ\varphi(a)
-a\otimes 1_{\cU}\right\rVert<\varepsilon/2
\]
for any $a\in F$. 
Any order zero completely positive map from a finite dimensional $\mathrm{C}^*$-algebra 
can be identified with a $*$-homomorphism from the cone over the algebra 
(see \cite[Proposition 1.2]{Win1} for instance). 
A cone over a finite dimensional $\mathrm{C}^*$-algebra is projective \cite{Lor}. 
Therefore we may assume that 
$\psi_i$, $i=0,1$ satisfy $\psi_i(E_i)\subset A\otimes M_l$ 
for some unital matrix subalgebra $M_l$ of $\cU$. 

Choose $m\in\N$ so that $(m{-}1)^{-1}<\varepsilon/2$. 
Let $e_1,e_2,\dots,e_{lm}\in M_{lm}$ and $f_0,f_1,\dots,f_{lm}\in M_{lm+1}$ be 
mutually orthogonal minimal projections. 
For $j=1,2,\dots,m$, we put 
\[
I_j=\{n\in\N\mid(j-1)l+1\leq n\leq jl\}. 
\]
Let $\alpha_j:M_l\to M_{lm}$, $\beta_j:M_l\to M_{lm+1}$ 
and $\gamma_j:M_l\to M_{lm+1}$ be embeddings such that 
\[
\alpha_j(1_l)=\sum_{n\in I_j}e_n,\quad 
\beta_j(1_l)=\sum_{n\in I_j}f_n,\quad 
\gamma_j(1_l)=\sum_{n\in I_j}f_{n-1}. 
\]
Take a unitary $w\in M_l\otimes M_l$ 
such that $w(x\otimes y)w^*=y\otimes x$ for any $x,y\in M_l$. 
Let $u:[0,1]\to M_l\otimes M_l$ be a continuous path of unitaries 
such that $u(0)=1_l$ and $u(1)=w$. 
Define a continuous path of unitaries 
$\tilde u:[0,1]\to M_{lm}\otimes M_{lm+1}$ by 
\[
\tilde u(t)
=1_{lm}\otimes f_0+\sum_{j,k=1}^m(\alpha_j\otimes\beta_k)(u(t)). 
\]
For $i=0,1$, we define order zero completely positive maps 
$\lambda_i:M_l\to C([0,1])\otimes M_{lm}\otimes M_{lm+1}$ by 
\[
\lambda_0(x)(t)=\tilde u(t)\left(
\sum_{j=1}^m(1-t)\alpha_j(x)\otimes f_0+
\sum_{j,k=1}^m\left(1-\frac{(m{-}k)t}{m{-}1}\right)
\alpha_j(x)\otimes\beta_k(1_l)
\right)
\tilde u(t)^*
\]
and 
\[
\lambda_1(x)(t)=\sum_{k=1}^m\frac{(m{-}k)t}{m{-}1}1_{lm}\otimes\gamma_k(x). 
\]
It is straightforward to check 
\[
1-\frac{t}{m{-}1}\leq\lambda_0(1_l)(t)+\lambda_1(1_l)(t)\leq1
\]
for any $t\in[0,1]$. This means that $\lambda_0+\lambda_1$ is contractive.
Moreover one has 
\[
\lambda_0(x)(0)=\sum_{j=1}^m\alpha_j(x)\otimes1_{lm+1},\quad 
\lambda_0(x)(1)=
\sum_{k=1}^m\left(1-\frac{m{-}k}{m{-}1}\right)
1_{lm}\otimes\beta_k(x)
\]
and 
\[
\lambda_1(x)(0)=0,\quad 
\lambda_1(x)(1)=\sum_{k=1}^m\frac{m{-}k}{m{-}1}1_{lm}\otimes\gamma_k(x). 
\]
Hence we may regard $\lambda_i$ as a map 
from $M_l$ to the dimension drop algebra 
\[
Z=\{g\in C([0,1])\otimes M_{lm}\otimes M_{lm+1}\mid 
g(0)\in M_{lm}\otimes\C,\ g(1)\in\C\otimes M_{lm+1}\}. 
\]
Then $(\id_A\otimes\lambda_i)\circ\psi_j$ is an order zero map 
from $E_j$ to $A\otimes Z$ for each $i,j=0,1$. 
We also obtain 
\begin{align*}
&\left\lVert\left(\sum_{i,j}(\id_A\otimes\lambda_i)\circ\psi_j\right)\circ(\varphi\oplus\varphi)(a) -a\otimes1_Z\right\rVert\\
&<\left\lVert\sum_{i=0,1}\id_A\otimes\lambda_i
(a\otimes 1_l)-a\otimes1_Z\right\rVert+\frac{\varepsilon}{2}\\
&\leq\frac{1}{m{-}1}+\frac{\varepsilon}{2}<\varepsilon
\end{align*}
for every $a\in F$. 
Since there exists a unital embedding $Z\to\mathcal{Z}$, 
we can conclude that 
$A\cong A\otimes\mathcal{Z}$ has decomposition rank at most three. 
\end{proof}

\section{$\mathrm{C}^*$-algebras with tracial rank zero}\label{SecTAF}
In \cite{Lin, Lin2}, H. Lin introduced the notion of tracial rank, which is based on ideas of S. Popa \cite{Pop2}.
W. Winter \cite{Win07JFA,Win1} proved that if a unital separable simple $\mathrm{C}^*$-algebra $A$ has (locally) finite decomposition rank and has real rank zero, then $A$ has tracial rank zero. In this section, using the same strategy as in the proof of Theorem \ref{ThmU} we give a direct proof that $A$ has tracial rank zero under the assumption that the $\mathrm{C}^*$-algebra absorbs a UHF-algebra tensorially and has a unique tracial state. 
\begin{theorem}\label{ThmTAF}
Let $A$ be a unital separable simple nuclear quasidiagonal $\mathrm{C}^*$-algebra with a unique tracial state. Then the tracial rank of $A\otimes B$ is zero for any UHF-algebra $B$. 
\end{theorem}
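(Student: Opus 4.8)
The plan is to verify H.\ Lin's local characterisation of tracial rank zero \cite{Lin2}: it suffices to show that for every finite set $F$ of contractions in $A\otimes B$, every $\varepsilon>0$ and every nonzero positive $a\in A\otimes B$, there is a finite-dimensional subalgebra $D\subseteq A\otimes B$ with unit projection $p=1_D$ satisfying $\lVert[p,x]\rVert<\varepsilon$ and $\dist(pxp,D)<\varepsilon$ for all $x\in F$, together with $1-p\precsim a$. Since $B$ is an infinite UHF algebra it is $\mathcal{Z}$-absorbing, hence so is $A\otimes B$, and therefore $A\otimes B$ has strict comparison by \cite{Ror1}. As $A\otimes B$ has a unique tracial state $\tau$, the last condition $1-p\precsim a$ holds automatically once $\tau(1-p)<d_\tau(a):=\lim_{m}\tau(a^{1/m})$, which is a strictly positive number. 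Thus the whole task reduces to producing, for each $F$ and $\varepsilon$, a single finite-dimensional $D$ that almost commutes with $F$, almost contains $pFp$ in norm, and whose unit has trace as close to $1$ as we wish. A routine reduction, approximating the $B$-directions of $F$ inside a matrix block $M_d\subseteq B$ and carrying out the construction in the relative commutant of $M_d$, lets us assume $F\subseteq\iota(A)$, where $\iota\colon A\to A\otimes B$ is the canonical embedding.

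I would then run the framework of the proof of Theorem \ref{ThmU} essentially verbatim, with the universal UHF algebra replaced by $B$ and the ambient algebra taken to be $A\otimes B\otimes M_2$, which is unital, separable, simple, nuclear, infinite-dimensional and has strict comparison with a unique trace, so that Proposition \ref{PropStriComp}, Theorem \ref{ThmKR} and Lemma \ref{LemSI} all apply. Quasidiagonality of $A$ gives unital completely positive, asymptotically multiplicative and asymptotically isometric maps $\rho_n\colon A\to M_{k_n}$. Unlike in Theorem \ref{ThmU}, the matrix algebra $M_{k_n}$ need not embed unitally into $B$; however, being an infinite UHF algebra, $B$ contains for every $k$ a (non-unital) copy of $M_{k}$ whose unit $e$ is a projection whose trace is as close to $1$ as desired, obtained from $k$ mutually equivalent orthogonal projections of total trace slightly below $1$. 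Composing $\rho_n$ with such an embedding $\sigma_n\colon M_{k_n}\to B$ yields $\varphi_n=\sigma_n\circ\rho_n\colon A\to B$ with $\varphi_n(1)=e_n$ and $\tau(e_n)\to1$; crucially $1-\varphi(1)$ lies in the trace kernel $J$, so $\varphi$ is unital modulo $J$ and $\Phi_0(\varphi(1))=1$. Because $A\otimes A$ has a unique trace its GNS-completion is the injective $\mathrm{II}_1$-factor, so by \cite{Con3} the flip is approximately inner in $\lVert\cdot\rVert_{2,\tau_A\otimes\tau_A}$; the reindexation of Theorem \ref{ThmU} then gives unitaries $u_n\in A\otimes B$ with $\lVert u_n\varphi_n(a)u_n^*-\iota(a)\rVert_{2,\tau_A\otimes\tau_B}\to0$. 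Defining $\pi(a)$ to be the diagonal element with entries $\varphi_n(a)$ and $\iota(a)$, $u$ the off-diagonal partial isometry assembled from the $u_n$, and the corner projections $p,q$ in $\cA_0=(A\otimes B\otimes M_2)^{\omega}$ exactly as there, Proposition \ref{PropStriComp} (applied after compressing by $P=\pi(1)$, which satisfies $\Phi_0(P)=1$) gives $\tau(p)=\tau(q)$ for every trace of $\cB=\cA_0\cap\pi(A)'$, and that $\cB$ has strict comparison for projections. Writing $p_\varphi=\varphi(1)\oplus0\le p$ for the unit of the matrix corner, one has $\tau(p-p_\varphi)=0$, hence $\tau(p_\varphi)=\tau(q)$ as well.

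The decisive step, and the only real departure from Theorem \ref{ThmU}, is to extract an \emph{honest} finite-dimensional subalgebra rather than a pair of order-zero maps. Here I would imitate the proof of Proposition \ref{PropStriComp}(ii): lift the Murray--von Neumann equivalence $\Phi_0(p_\varphi)\sim\Phi_0(q)$, available in the finite von Neumann algebra $M_0\cap\Phi_0(\pi(A))'$, to a contraction $r\in\cB$ via Theorem \ref{ThmKR}, set $w=qrp_\varphi$, and correct the trace-negligible defect $p_\varphi-w^*w\in\cB\cap\ker\Phi_0$ by the relative property (SI) of Lemma \ref{LemSI}. This produces a genuine partial isometry $V\in\cB$ with $V^*V=p_\varphi$ in norm, not merely in $2$-norm, and $VV^*\le q$ with $q-VV^*$ of trace zero in $\cB$. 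Since $V^*V=p_\varphi$ is exactly the unit of the matrix copy $\sigma_n(M_{k_n})$ and $V$ commutes with $\pi(A)$, the map $x\mapsto VxV^*$ is an honest $*$-isomorphism onto a matrix subalgebra $D=V\sigma_n(M_{k_n})V^*$ of the $q$-corner, and the intertwining $V\varphi(a)V^*=\iota(a)VV^*$ shows that, with $p:=VV^*$, the compressions $p\,\iota(a)\,p$ lie in $D$ in norm. Reindexing back to $A\otimes B$, the projection $p$ almost commutes with $F$, satisfies $\dist(pxp,D)<\varepsilon$ for $x\in F$, and has $\tau(1-p)\to0$; this is the desired tracial approximation, and its tracially small complement feeds the strict-comparison reduction of the first paragraph.

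The hard part is precisely this conversion of a $2$-norm (von Neumann level) equivalence into a norm-level partial isometry whose source projection $V^*V$ equals the matrix-algebra unit on the nose. The traces of $p_\varphi$ and $q$ are only \emph{equal}, so strict comparison in $\cB$ does not by itself yield a subequivalence; it is the combination of the Kirchberg--R\o rdam lifting (Theorem \ref{ThmKR}) with the relative property (SI) (Lemma \ref{LemSI}) that bridges the gap, exactly as in Proposition \ref{PropStriComp}(ii), and it is this norm-level control that lets the transported algebra be genuinely multiplicative rather than multiplicative only in $2$-norm. A secondary technical point, special to general UHF $B$, is that $M_{k_n}$ is realised only with a complement projection of trace tending to $0$; this is harmless, and is in fact the natural source of the complement $1-p$ that tracial rank zero permits, but it must be tracked through the non-unital bookkeeping around $p_\varphi$, and it is what makes tracial rank zero, rather than the finite decomposition rank of Theorem \ref{ThmU}, the correct conclusion for arbitrary UHF coefficients.
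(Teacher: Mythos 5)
Your architecture is the paper's (quasidiagonal approximations $\rho_n$, Connes' flip trick producing the intertwining unitaries $u_n$, the $2\times 2$ matrix trick, and comparison in the central sequence algebra to conjugate $\sigma_n(M_{k_n})$ into almost-central position), but the step you yourself single out as decisive contains a genuine gap. You seek a partial isometry $V\in\cB$ with $V^*V=p_\varphi$ and $VV^*\le q$, where necessarily $\tau(p_\varphi)=\tau(q)$ for every trace: equality is forced, because $\iota$ is unital and the flip intertwining $\lVert u_n\varphi_n(a)u_n^*-\iota(a)\rVert_2\to 0$ applied at $a=1$ forces $\tau(e_n)\to 1$ (you need this anyway for $\Phi_0(\varphi(1))=1$). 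To absorb the defect $p_\varphi-w^*w\in\cB\cap\ker\Phi_0$ you invoke Lemma \ref{LemSI}, with $(f_n)_n$ the representatives of $q-ww^*$. But the hypothesis of Lemma \ref{LemSI} is $\lim_{m\to\infty}\lim_{n\to\omega}\min_{\tau}\tau(f_n^m)>0$, and in your equal-trace situation $\Phi_0(ww^*)=\Phi_0(q)$ (since $\Phi_0(r)$ implements an honest equivalence in the finite von Neumann algebra), so $q-ww^*\in\cB\cap\ker\Phi_0$, its representatives tend to $0$ in $\lVert\cdot\rVert_2$, and the hypothesis fails: there is no room inside $q$ to absorb anything. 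In Proposition \ref{PropStriComp}(ii) this correction is legitimate only because of the strict inequality $\tau(p)<\tau(q)$, which gives $\tau(q-ww^*)\ge\tau(q-p)>0$. The obstruction is structural, not technical: the impossibility of implementing an equal-trace equivalence by a single norm-level partial isometry in $\cB$ is exactly why Lemma \ref{FunLem} produces two contractions instead of one, and why Theorem \ref{ThmU} obtains decomposition rank one rather than zero. A secondary but also real problem: your $\pi$ is non-unital (since $\sigma_n$ is), while Theorem \ref{ThmKR}, Lemma \ref{LemSI} and Proposition \ref{PropStriComp} are stated and proved for unital embeddings --- the proof of Lemma \ref{LemSI} uses unitality to turn $s_n^*\pi_n(x)s_n\approx\pi_n(x)e_n$ into $s_n^*s_n\approx e_n$ --- and compressing by $P=\pi(1)$ does not repair this, since $P\cA_0P$ is an ultraproduct of \emph{varying} corners $(e_n\oplus 1)(A\otimes B\otimes M_2)(e_n\oplus 1)$, not the ultrapower of a fixed algebra covered by Section \ref{Sec3}.

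The paper's proof removes both difficulties at once. It first reduces to $B=\cU$ by \cite[Lemma 2.4]{MS0}; then every $M_{k_n}$ embeds \emph{unitally} into $\cU_0$, so $\varphi_n$ and $\pi$ are unital and Section \ref{Sec3} applies verbatim. The small tracial complement is then manufactured not from a non-unital matrix embedding but inside the commuting tensor factor $\cU_1$: one picks a projection $e_0\in\cU_1$ with $1-\varepsilon<\tau_{\cU_1}(e_0)<1$ and compares $e$, the diagonal element with entries $1_A\otimes e_0$ and $0$, against $q$. Since $\tau(e)=\frac{1}{2}\tau_{\cU_1}(e_0)<\frac{1}{2}=\tau(q)$ strictly for all $\tau\in T(\cA\cap\pi(A)')$, Proposition \ref{PropStriComp}(ii) applies as a black box and yields an honest partial isometry $\tv$ with $\tv^*\tv=e$ and $q\tv=\tv$; no separate (SI) correction is needed at this level. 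Finally, representatives $v_n$ can be chosen with $v_n^*v_n=1_A\otimes e_0$ exactly, and because $e_0\in\cU_1$ commutes with $\sigma_n(M_{k_n})\subset\cU_0$, the compression $\Ad v_n$ is multiplicative on $\sigma_n(M_{k_n})$; hence $B_n=\Ad v_n(\sigma_n(M_{k_n}))$ are genuine finite-dimensional subalgebras with $\tau(1_{B_n})=\tau_{\cU_1}(e_0)>1-\varepsilon$, almost commuting with $\iota(A)$ and almost containing $\iota(a)1_{B_n}$. In short, the trace gap needed for comparison must be created in a factor commuting with both $\pi(A)$ and the matrix copy --- this is what $\cU_1$ (and the prior reduction to $\cU$) provides, and what your construction, which ties the gap to the non-unitality of $\sigma_n\colon M_{k_n}\to B$, cannot supply.
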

\begin{proof}
Because of \cite[Lemma 2.4]{MS0} it is enough to show that $A\otimes \cU$ has tracial rank zero. 
We use the same notations as in the proof of Theorem \ref{ThmU}. Let $\varepsilon >0$. Since $A\otimes M_n$ also satisfies the same assumption in the theorem for any $n\in\N$, it suffices to construct a sequence of finite dimensional $\mathrm{C}^*$-subalgebras $B_n$, $n\in\N$ of $A\otimes \cU$ such that
\[
\tau_{A\otimes\cU}\left(1_{A\otimes\cU} -1_{B_n}\right)<\varepsilon,\quad  
\lim_{n\to\omega} \left\lVert \left[ 1_{B_n}, \iota(a) \right] \right\rVert =0,
\]
and 
\[
\lim_{n\to\omega}\dist(\iota(a)1_{B_n},B_n)=0, 
\quad{\rm for\ any\ } a \in A. 
\]

Recall that $\varphi_n:A\to\cU_0$ is the composition of 
the unital completely positive map $\rho_n:A\to M_{k_n}$ and 
the unital embedding $\sigma_n:M_{k_n}\to\cU_0$. 
Clearly we have $\left[\varphi_n(a), 1_A\otimes x\right]=0$ for any $n\in\N$, $a\in A$, and $x\in \cU_1$. For any $\tau\in T(\cA\cap\pi(A)')$ we have seen that $\tau(p)=\tau(q)=1/2$ in the proof of Theorem \ref{ThmU} . 
Thus for any $\tau\in T(\cA\cap\pi(A)')$, the functional 
\[
\cU_1\ni x\mapsto2\tau\left(\left(
\begin{bmatrix}
1_A\otimes x& 0 \\ 0 & 0
\end{bmatrix}
\right)_n\right)\in\C
\]
gives the unique tracial state $\tau_{\cU_1}$ of $\cU_1$. 
Let $e_0$ be a projection in $\cU_1$ such that $1-\varepsilon < \tau_{\cU_1}(e_0) <1$. Set 
\begin{displaymath}
e=\left(\left[
\begin{array}{cc}
1_A\otimes e_0& 0 \\ 0 & 0 
\end{array}
\right]\right)_n \in \cA\cap\pi(A)'.
\end{displaymath}
Then we have 
\[
\tau(e)=\frac{1}{2}\tau_{\cU_1}(e_0) < \frac{1}{2}=\tau(p) =\tau (q)\quad {\rm for\ any\ } \tau \in T(\cA\cap\pi(A)').
\]

Applying (ii) of Proposition \ref{PropStriComp} to $B= A\otimes \cU\otimes M_2$, we see that $\cA\cap\pi(A)' $ has strict comparison for projections. We obtain a partial isometry $\tv\in\cA\cap\pi(A)'$ such that 
\[ \tv^*\tv = e\quad{\rm and\ }\quad q \tv =\tv. \]
By the definition of $p$ and $q$ there exists a sequence $v_n$, $n\in\N$ of contractions in $A\otimes\cU$ such that 
\begin{displaymath}
\left(\left[
\begin{array}{cc}
0 & 0 \\ v_n & 0 
\end{array}
\right]\right)_n
= \tv\quad{\rm in\ } \cA,\quad{\rm and}\quad (v_n^*v_n)_n = (1_A\otimes e_0)_n\quad{\rm in \ } (A\otimes \cU)^{\omega}.
\end{displaymath}
By perturbing $v_n$ slightly, we may further assume $v_n^*v_n=1_A\otimes e_0$. 
Because of $\tv\in \pi(A)'$ we have 
\[ 
\left(v_n\varphi_n(a)\right)_n =\left(\iota(a)v_n\right)_n\quad{\rm in\ }(A\otimes\cU)^{\omega}\ {\rm for\ any\ }a\in A.
\] 
Then it follows that $[\left(v_nv_n^*\right)_n, \iota(a) ] =0$ in $(A\otimes \cU)^{\omega}$ for any $a\in A$. 
We define finite dimensional $\mathrm{C}^*$-subalgebras $B_n\subset A\otimes\cU$ 
for $n\in\N$ by 
\[
B_n=\Ad v_n \left(\sigma_n(M_{k_n})\right).
\]
One can verify that these $B_n$ satisfy the desired properties.
\end{proof}

The following corollary is a direct consequence of 
\cite[Theorem 5.4]{LN}, \cite[Theorem 1.1]{MS1} and the theorem above. 
\begin{corollary}\label{CorTAF}
Let $A$ be a unital separable simple nuclear quasidiagonal $\mathrm{C}^*$-algebra with a unique tracial state. Suppose that $A$ has strict comparison and satisfies the UCT. Then $A$ is classifiable by the Elliott invariants and is isomorphic to a unital simple approximately subhomogeneous algebra.
\end{corollary}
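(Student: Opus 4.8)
The plan is to prove Corollary \ref{CorTAF} by combining three inputs: the tracial-rank-zero conclusion of Theorem \ref{ThmTAF}, the $\mathcal{Z}$-absorption provided by \cite[Theorem 1.1]{MS1}, and a classification theorem of Lin and Niu \cite[Theorem 5.4]{LN} for simple $\mathrm{C}^*$-algebras whose UHF-tensor-products have tracial rank zero. The key observation is that the hypotheses of the corollary---unital, separable, simple, nuclear, quasidiagonal, unique trace, strict comparison, and the UCT---are precisely what is needed to feed each of these machines.

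First I would record that by Theorem \ref{ThmTAF}, for every UHF-algebra $B$ the tensor product $A\otimes B$ has tracial rank zero; in particular this holds for $A\otimes\cU$. This is the ``tracially approximately finite after UHF-stabilization'' condition that the Lin--Niu classification takes as input. Second, I would invoke \cite[Theorem 1.1]{MS1}: since $A$ is unital, separable, simple, nuclear with a unique tracial state and strict comparison, it is $\mathcal{Z}$-absorbing, so $A\cong A\otimes\mathcal{Z}$. (If $A$ is finite-dimensional the statement is trivial, so one may assume $A$ is infinite-dimensional and these hypotheses apply.) Third, with $A$ satisfying the UCT, having the property that $A\otimes B$ has tracial rank zero for UHF-algebras $B$, and being $\mathcal{Z}$-stable, the classification result \cite[Theorem 5.4]{LN} applies directly and yields that $A$ is classifiable by its Elliott invariant and is isomorphic to a unital simple approximately subhomogeneous algebra. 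The proof is therefore essentially an assembly of these three results in the correct order.

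The main obstacle---or rather, the one point requiring care rather than mere citation---is verifying that the precise hypotheses of \cite[Theorem 5.4]{LN} match what we have produced. That theorem is stated for $\mathcal{Z}$-stable (or finite-nuclear-dimension) simple algebras $A$ satisfying the UCT such that $A$ tensored with some UHF-algebra has tracial rank zero; one must confirm that a single UHF-algebra $B$ suffices and that no additional regularity (such as real rank zero of $A$ itself, which we do \emph{not} have) is secretly needed. Theorem \ref{ThmTAF} conveniently supplies the TAF condition for \emph{every} UHF-algebra $B$, which more than covers whatever normalization \cite[Theorem 5.4]{LN} requires, so I expect this matching to go through once the statements are lined up. The remainder is bookkeeping: strict comparison together with the unique trace feeds \cite[Theorem 1.1]{MS1} to give $\mathcal{Z}$-stability, and the three hypotheses---UCT, $\mathcal{Z}$-stability, and UHF-tracial-rank-zero---then trigger the classification verbatim.
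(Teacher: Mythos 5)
Your proposal is correct and matches the paper's own argument, which likewise obtains the corollary as a direct consequence of Theorem \ref{ThmTAF}, \cite[Theorem 1.1]{MS1} (for $\mathcal{Z}$-absorption), and the Lin--Niu classification theorem \cite[Theorem 5.4]{LN}. The paper treats this assembly as immediate, so your additional care in lining up the hypotheses of \cite[Theorem 5.4]{LN} is sensible but not a departure from its route.
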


In the rest of this section, 
we would like to discuss an application of the corollary above. 
R. T. Powers and S. Sakai \cite{PS75CMP} conjectured that 
every strongly continuous action $\alpha:\R\curvearrowright A$ 
on a UHF algebra $A$ is approximately inner. 
An action $\alpha:\R\curvearrowright A$ is said to be strongly continuous 
if the map $t\mapsto\alpha_t(a)$ is continuous for every $a\in A$. 
An action $\alpha:\R\curvearrowright A$ is said to be approximately inner 
if there exists a sequence $(h_n)_n$ of self-adjoint elements in $A$ such that 
\[
\max_{\lvert t\rvert\leq1}\lVert e^{ith_n}ae^{-ith_n}-\alpha_t(a)\rVert\to0
\]
as $n\to\infty$ for all $a\in A$. 
A. Kishimoto \cite{K03JFA} gave 
a counter-example to the AF version of the Powers-Sakai conjecture. 
Namely, he constructed a unital simple AF algebra $A$ and 
a strongly continuous action $\alpha:\R\curvearrowright A$ 
which is not approximately inner. 
Here we shall present a counter-example to the Powers-Sakai conjecture 
by using our main result. 
In what follows, 
an action of $\T=\R/\Z$ is identified with a periodic action of $\R$. 

\begin{theorem}
Let $A$ be a unital simple infinite-dimensional AF algebra with a unique tracial state $\tau$. 
There exists a strongly continuous action $\alpha:\T\curvearrowright A$ 
such that the crossed product $A\rtimes_\alpha\T$ is simple. 
In particular, $\alpha$ is not approximately inner as an $\R$-action. 
\end{theorem}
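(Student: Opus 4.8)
The plan is to construct the desired $\T$-action as a dual-type action arising from the crossed product structure, using the classification machinery established earlier in the paper. The key observation is that the theorem must produce an action $\alpha:\T\curvearrowright A$ whose crossed product is simple, and the simplicity of the crossed product is precisely what forces $\alpha$ to be non-approximately-inner as an $\R$-action: if $\alpha$ were approximately inner, then the crossed product $A\rtimes_\alpha\T$ would contain a nontrivial central subalgebra (coming from the spectral subspaces / the dual $\Z$-grading), contradicting simplicity. So the logical skeleton is \emph{first} produce a $\T$-action with simple crossed product, and \emph{then} deduce the non-inner conclusion as a formal consequence.

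To build the action, I would start from a UHF-absorbing model. The natural strategy is to find a simple $\mathrm{C}^*$-algebra $C$ equipped with a suitable automorphism or $\Z$-grading, such that taking a crossed product by $\Z$ (or passing to a fixed-point/spectral decomposition) realizes $A$ as a $\T$-invariant algebra with simple crossed product. Concretely, one wants a unital simple AF algebra $A$ with unique trace together with a $\T$-action; the cleanest route is to exhibit the pair $(A,\alpha)$ via an inductive-limit or crossed-product construction in which simplicity of $A\rtimes_\alpha\T$ can be checked directly (e.g.\ the fixed-point algebra is simple and the action is sufficiently outer/aperiodic on the relevant $\Z$-dual). Here is where the results of the paper enter decisively: by Corollary \ref{CorTAF}, any unital separable simple nuclear quasidiagonal $\mathrm{C}^*$-algebra with a unique tracial state, strict comparison, and satisfying the UCT is classifiable and is an ASH algebra. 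This classification lets me \emph{transport} an abstractly constructed action onto the given AF algebra $A$: I construct some model algebra carrying the desired $\T$-action with simple crossed product, verify it has the invariant and regularity properties covered by the corollary, and then use uniqueness of the Elliott invariant to identify the fixed-point data with $A$.

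The sharpest technical point, and the reason the paper's main theorem is needed at all, is that the crossed product $A\rtimes_\alpha\T$ need \emph{not} be AF or even easy to classify directly; but one can arrange for it to be a unital simple nuclear quasidiagonal $\mathrm{C}^*$-algebra with a unique trace and strict comparison satisfying the UCT, so that Corollary \ref{CorTAF} applies to it and pins down its structure. In other words, the regularity theory developed here (finite decomposition rank $\Rightarrow$ classifiability for such algebras) is exactly what makes the crossed product a tractable, classifiable object, which in turn lets one arrange its simplicity. I would verify simplicity of $A\rtimes_\alpha\T$ via the standard criterion that the action have no nontrivial invariant ideals together with an outerness/Connes-spectrum condition on the generator of the $\T$-action.

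Finally, the non-inner conclusion: the last sentence of the statement asserts that $\alpha$, viewed as a periodic $\R$-action, is not approximately inner. The hard part will be establishing the implication ``simple crossed product $\Rightarrow$ not approximately inner'' rigorously and arranging the construction so that simplicity actually holds; the deduction itself is clean once one knows that an approximately inner $\R$-action on a simple $\mathrm{C}^*$-algebra with a $\T$-periodic structure yields a crossed product with nontrivial center or a nontrivial invariant ideal. I expect the main obstacle to lie not in this deduction but in producing the action in the first place: one must exhibit a concrete $(A,\alpha)$ and \emph{prove} its crossed product is simple, and this is precisely where Corollary \ref{CorTAF} together with Kishimoto's techniques for the AF case \cite{K03JFA} must be combined, since the periodicity forces the generator of the $\R$-action to have unbounded spectrum in a way incompatible with approximate innerness.
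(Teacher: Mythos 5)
Your high-level skeleton --- build a model algebra carrying a dual-type $\T$-action, transport it onto $A$ via the classification result (Corollary \ref{CorTAF}), and then deduce non-approximate-innerness from simplicity of the crossed product --- is indeed the skeleton of the paper's proof. But your proposal stops at the skeleton, and the actual content of the theorem lies in the steps you leave unspecified. The paper's construction is: take $G=\bigoplus_{\Z}K_0(A)$ with positive cone $\{(x_n)_n \mid \sum_n\tau_*(x_n)>0\}\cup\{0\}$, which is a simple dimension group because $\tau_*(K_0(A))$ is dense in $\R$; realize the shift on $G$ as an automorphism $\sigma$ of a stable AF algebra $B$ with $K_0(B)\cong(G,G^+)$; and form $C=(B\otimes\mathcal{Z}\otimes\mathcal{Z})\rtimes_{\sigma\otimes\gamma\otimes\id}\Z$, where $\gamma\in\Aut(\mathcal{Z})$ has the weak Rohlin property. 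Each tensor factor and hypothesis earns its keep: the weak Rohlin property of $\gamma$ forces the trace on $C$ to be unique up to scalars, Brown's theorem \cite{B98JFA} gives AF-embeddability (hence quasidiagonality) of the crossed product, Pimsner--Voiculescu gives $K_0(C)\cong K_0(A)$ and $K_1(C)=0$, and Corollary \ref{CorTAF} is then applied to the \emph{corner} $D=(e\otimes1\otimes1)C(e\otimes1\otimes1)$ --- not to any crossed product --- to conclude $D\cong A$. Your proposal supplies none of these: no K-theoretic model, no mechanism for uniqueness of trace, no source of quasidiagonality, so there is in the end no concrete action to speak of.

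Beyond incompleteness, the two places where you do commit to specifics would fail. First, you propose to arrange that $A\rtimes_\alpha\T$ itself satisfies the hypotheses of Corollary \ref{CorTAF} and to verify its simplicity by an invariant-ideal/Connes-spectrum analysis. This cannot work as stated: the crossed product of a unital algebra by $\T$ is never unital (in the paper's model it is stably isomorphic to $B\otimes\mathcal{Z}\otimes\mathcal{Z}\otimes\K$), so the corollary does not apply to it; and no spectral analysis is needed, because simplicity is immediate from Takai duality --- this is exactly why $\alpha$ is taken to be the dual action of $\sigma\otimes\gamma\otimes\id$. Second, your route from simplicity to non-approximate-innerness (``an approximately inner action would produce a nontrivial central subalgebra in the crossed product'') is asserted rather than proved; it is true for genuinely inner actions but there is no reason it survives the approximation, and it is not the known mechanism. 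The implication actually used runs through KMS theory: an approximately inner $\R$-action on a unital $\mathrm{C}^*$-algebra with a tracial state admits KMS states at every inverse temperature \cite[Theorem 2.3]{PS75CMP}, which is incompatible with simplicity of the crossed product of a periodic action \cite[Remark 3.5]{BEH80CMP}. Since you yourself flag this step as the hard part and leave it open, the final assertion of the theorem is also unproven in your write-up.
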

\begin{proof}
Let $G=\bigoplus_\Z K_0(A)$ be the infinite direct sum of $K_0(A)$ over $\Z$. 
Define $G^+\subset G$ by 
\[
G^+=\left\{(x_n)_n\in G\mid\sum_{n\in\Z}\tau_*(x_n)>0\right\}\cup\{0\}. 
\]
Since $\tau_*(K_0(A))$ is dence in $\R$, $(G,G^+)$ is a simple dimension group. 
Let $B$ be a simple stable AF algebra 
whose $K_0$-group is isomorphic to $(G,G^+)$. 
The AF algebra $B$ admits a densely defined lower semi-continuous trace, 
unique up to constant multiples. 
Choose a projection $e\in B$ 
whose $K_0$-class is equal to $(\dots,0,0,[1_A],0,0,\dots)\in G$, 
where $[1_A]$ is in the $0$-th summand. 
There exists an automorphism $\sigma\in\Aut(B)$ 
such that $K_0(\sigma)$ is the shift on $G$, 
that is, $K_0(\sigma)((x_n)_n)=(x_{n+1})_n$. 
The trace on $B$ is invariant under $\sigma$.

Let $\gamma\in\Aut(\mathcal{Z})$ be an automorphism of $\mathcal{Z}$ 
which has the weak Rohlin property (\cite[Definition 1.1]{S10JFA}). 
We consider the crossed product 
\[
C=(B\otimes\mathcal{Z}\otimes\mathcal{Z})
\rtimes_{\sigma\otimes\gamma\otimes\id}\Z
=((B\otimes\mathcal{Z})\rtimes_{\sigma\otimes\gamma}\Z)\otimes\mathcal{Z}. 
\]
Clearly $C$ is $\mathcal{Z}$-stable. 
Since the trace on $B\otimes\mathcal{Z}\otimes\mathcal{Z}$ is 
preserved by the automorphism, 
$C$ admits a densely defined lower semi-continuous trace. 
Moreover, this trace is unique up to scalar multiples, 
because $\gamma$ has the weak Rohlin property 
(one can prove this in the same way as \cite[Remark 2.8]{MS0}). 
By \cite{B98JFA}, $C$ is AF embeddable, and hence is quasidiagonal. 
By the Pimsner-Voiculescu exact sequence, 
we have $K_0(C)\cong K_0(A)$ and $K_1(C)=0$. 

Set $D=(e\otimes1\otimes1)C(e\otimes1\otimes1)$. 
Then $D$ is a unital separable simple nuclear $C^*$-algebra, 
which is $\mathcal{Z}$-stable, has a unique tracial state 
and is quasidiagonal. 
Besides, $(K_0(D),K_0(D)^+,[1_D],K_1(D))\cong(K_0(A),K_0(A)^+,[1_A],K_1(A))$. 
It follows from the corollary above that $D$ is isomorphic to $A$. 
Let $\alpha:\T\curvearrowright C$ be 
the dual action of $\sigma\otimes\gamma\otimes\id$. 
The projection $e\otimes1\otimes1$ is invariant under $\alpha$, 
and so we can restrict $\alpha$ to $D$. 
The crossed product $D\rtimes_\alpha\T$ is 
stably isomorphic to $C\rtimes_\alpha\T$, 
which is isomorphic to $B\otimes\mathcal{Z}\otimes\mathcal{Z}\otimes\K$ 
by the duality theorem. 
Therefore $D\rtimes_\alpha\T$ is simple. 
In particular, $\alpha$ on $D$ is not approximately inner 
by \cite[Theorem 2.3]{PS75CMP} and \cite[Remark 3.5]{BEH80CMP} 
\end{proof}

\section{Kirchberg algebras}\label{SecKir}

A separable simple nuclear purely infinite $\mathrm{C}^*$-algebra is called a Kirchberg algebra. 
 In this section we prove the following theorem. See \cite[Definition 2.1]{WZ10Adv} for the definition of nuclear dimension.

\begin{theorem}\label{Kirchberg}
Any Kirchberg algebra has nuclear dimension at most three. 
\end{theorem}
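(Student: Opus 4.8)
The plan is to reduce the statement about an arbitrary Kirchberg algebra $\mathcal{O}$ to the finite case handled by Theorem \ref{ThmU}, exploiting the tight connection between nuclear dimension and decomposition rank together with the structure theory of purely infinite algebras. The key observation is that a Kirchberg algebra is $\mathcal{O}_\infty$-absorbing and $\mathcal{U}$-stable phenomena have a natural analogue in the purely infinite world. First I would recall that for unital Kirchberg algebras one has the description $\mathcal{O}\cong\mathcal{O}\otimes\mathcal{O}_\infty$ (Kirchberg's absorption theorem), and that nuclear dimension is preserved or controlled under tensoring with strongly self-absorbing algebras. Since nuclear dimension, unlike decomposition rank, does not force quasidiagonality, it is the correct invariant to bound for a genuinely infinite algebra.

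The main strategy is to transport the one-dimensional approximation machinery of Theorem \ref{ThmU} into the infinite setting. In that theorem the crucial inputs were: a unital embedding $\pi$ of $A$ into a central-sequence-type algebra, the surjectivity of the trace-collapsing quotient $\Phi$ (Theorem \ref{ThmKR}), and the strict comparison for projections of the relative commutant $\mathcal{A}_0\cap\pi(A)'$ (Proposition \ref{PropStriComp}), which in turn fed into the Murray--von Neumann type splitting of Lemma \ref{FunLem}. For a Kirchberg algebra the analogue of strict comparison for projections is far stronger: in $\mathcal{O}^\omega\cap\mathcal{O}'$ any two nonzero projections whose classes vanish under the relevant comparison are Murray--von Neumann equivalent, because the relative commutant is again purely infinite and simple in the relevant corners. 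I would therefore replace the two-contraction splitting of Lemma \ref{FunLem} by honest partial isometries implementing equivalences, which should give completely positive order-zero maps $\psi_{i,n}$ realizing a nuclear-dimension-$d$ approximation. The arithmetic of the dimension-drop construction in the proof of Theorem \ref{MainThm} then upgrades a dimension-one input to a bound of three.

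Concretely, the sequence of steps I would carry out is: (1) write $\mathcal{O}\cong\mathcal{O}\otimes\mathcal{O}_\infty\cong\mathcal{O}\otimes\mathcal{O}_\infty\otimes\mathcal{U}$-type factorizations to isolate a UHF-like tensor factor playing the role of $\mathcal{U}$ in Theorem \ref{ThmU}; (2) build, inside a suitable ultrapower, an embedding $\pi$ of $\mathcal{O}$ together with an approximately central unitary conjugating a quasidiagonal-style approximation onto $\iota$, mimicking the construction of $u_n$ there; (3) invoke the purely infinite analogues of Theorem \ref{ThmKR} and Proposition \ref{PropStriComp} to obtain the partial-isometry splitting; (4) assemble order-zero maps $\tilde\sigma_{i,n}$ and run the dimension-drop interpolation of Section \ref{Sec5} to obtain nuclear dimension at most three. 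The hard part will be Step (2) together with the justification of the comparison property in Step (3): unlike the finite-trace case, $\mathcal{O}$ has no tracial states, so the entire 2-norm and trace-surjectivity apparatus of Sections \ref{Sec3} and \ref{Sec4} must be rebuilt using the comparison theory of positive elements in purely infinite algebras (the $W(\mathcal{O})$ Cuntz semigroup is trivial apart from the class of nonzero projections), and one must verify that approximate centrality survives the passage from the finite to the infinite setting. Establishing that the relevant relative commutant still admits the splitting needed to feed Lemma \ref{FunLem}'s replacement is where the genuine work lies; the combinatorial dimension-drop bookkeeping producing the final bound of three is then routine.
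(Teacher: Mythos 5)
Your one sound structural guess is that pure infiniteness of the relative commutant replaces the whole tracial apparatus: the paper's proof rests exactly on Kirchberg's theorem (\cite[Corollary 4]{Kir}) that $\mathcal{A}_0\cap\pi(A)'$ is purely infinite and simple, and it rebuilds \emph{nothing} from Sections \ref{Sec3}--\ref{Sec4} (there is no analogue of $J$, $\Phi$, Theorem \ref{ThmKR} or Proposition \ref{PropStriComp} at all). But the surrounding architecture of your proposal fails at several distinct points. First, Step (1) is false: $\mathcal{O}\otimes\mathcal{O}_\infty\otimes\mathcal{U}\not\cong\mathcal{O}\otimes\mathcal{O}_\infty$ (tensoring with $\mathcal{U}$ rationalizes $K_0$), and even a bound on $\dim_{\mathrm{nuc}}(\mathcal{O}\otimes\mathcal{U})$ would say nothing about $\mathcal{O}$ itself, just as $\dim_{\mathrm{nuc}}(\mathcal{O}\otimes\mathcal{O}_2)=\dim_{\mathrm{nuc}}(\mathcal{O}_2)=1$ says nothing about $\mathcal{O}$. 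The paper's substitute for the pair $(\mathcal{U}_0,\mathcal{U}_1)$ is two commuting unital copies $B_0,B_1\cong\mathcal{O}_\infty$ inside $\mathcal{O}_\infty$, into which one embeds $M_n\oplus M_{n+1}$ rather than $M_n$ (a single $M_n$, $n\geq 2$, admits no unital embedding into $\mathcal{O}_\infty$ for $K_0$ reasons). Second, Step (2) is impossible as stated: a Kirchberg algebra is purely infinite, hence traceless and not quasidiagonal, so there are no approximately multiplicative u.c.p.\ maps onto matrix algebras, no $2$-norm, and no flip-automorphism argument producing the unitaries $u_n$. The paper's replacement---the key idea absent from your proposal---is an exact unital embedding $\rho:A\to\mathcal{O}_2$ (Kirchberg--Phillips) composed with a unital embedding $\sigma:\mathcal{O}_2\to 1_A\otimes pB_0p$, with finite-dimensionality entering only at the very end through $\dim_{\mathrm{nuc}}(\mathcal{O}_2)=1$ (\cite[Theorem 7.4]{WZ10Adv}); and no analogue of $u_n$ is needed anywhere, because in the purely infinite relative commutant arbitrary nonzero projections are comparable.

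Third, your plan to replace the two contractions of Lemma \ref{FunLem} by ``honest partial isometries implementing equivalences'' hits a genuine $K$-theoretic obstruction. With $q=ss^*$ and $p=1_{B_0}-q$ one has $[1_A\otimes q]=[1_{A\otimes B_0}]$ but $[1_A\otimes p]=0$ in $K_0(A\otimes B_0)$; since Murray--von Neumann equivalence of constant projections in an ultrapower forces their equivalence in $A\otimes B_0$ itself, taking $A=\mathcal{O}_\infty$ shows that $1_A\otimes p$ and $1_A\otimes q$ are \emph{not} equivalent in $\mathcal{A}_0$, a fortiori not in $\mathcal{A}_0\cap\pi(A)'$. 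Pure infiniteness gives only subequivalence of projections, and this is precisely why the paper retains the two-contraction splitting of Lemma \ref{FunLem} even in the infinite setting (its distance estimate is also what localizes $\lvert v_i\rvert$ in $(1_A\otimes pB_1)^\omega$ and makes $\Ad v_i\circ\sigma$ order zero). Finally, Step (4) misidentifies where the bound of three comes from: no dimension-drop interpolation occurs in the Kirchberg case. The constant three is plain color counting---the four order-zero maps $\tilde\sigma_i\circ\beta_j$, $i,j\in\{0,1\}$ (two contractions from Lemma \ref{FunLem} times the two colors of the $\mathcal{O}_2$-approximation), give a four-colored approximation of $a\otimes 1_{\mathcal{O}_\infty}$, hence nuclear dimension at most three. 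The dimension-drop argument of Section \ref{Sec5} exists only to pass from $A\otimes\mathcal{U}$ back to $A\otimes\mathcal{Z}\cong A$ in the finite case, and has no counterpart here because a Kirchberg algebra already absorbs $\mathcal{O}_\infty$.
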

\begin{proof}
By \cite[Corollary 2.8]{WZ10Adv}, 
it suffices to show that a unital Kirchberg algebra $A$ has nuclear dimension at most three. 
Because of the Kirchberg-Phillips theorem, 
$A$ is isomorphic to $A\otimes\mathcal{O}_\infty$. 
We let $B_i$, $i=0,1$ be commuting unital $\mathrm{C}^*$-subalgebras of $\mathcal{O}_\infty$ such that $B_0\cong B_1 \cong \mathcal{O}_\infty$ and $\mathrm{C}^*(B_0\cup B_1)= \mathcal{O}_\infty$. 
Let $s\in B_0$ be a non-unitary isometry and 
put $q=ss^*$, $p=1_{B_0}-q$. 
Define $\iota:A\to A\otimes B_0$ by $\iota(a)=a\otimes q$. 
The Kirchberg-Phillips theorem also tells us that 
there exists a unital embedding $\rho:A\to\mathcal{O}_2$. 
Since $pB_0p$ contains a unital copy of $\mathcal{O}_2$, 
there exists a unital embedding 
$\sigma:\mathcal{O}_2\to1_A\otimes pB_0p\subset A\otimes B_0$. 
Let $\varphi=\sigma\circ\rho$. 

We would like to apply the same argument as the proof of Theorem \ref{ThmU} 
to $\varphi$ and $\iota$. 
Set 
\[
\mathcal{A}=(A\otimes \mathcal{O}_\infty )^\omega,\quad 
\mathcal{A}_0
=(A\otimes B_0)^\omega
\subset\mathcal{A}. 
\]
Define a unital homomorphism $\pi:A\to\mathcal{A}_0$ by 
\[
\pi(a)=\varphi(a)+\iota(a)
\]
for $a\in A$. 
By \cite[Corollary 4]{Kir}, 
$\mathcal{A}_0\cap\pi(A)'$ is purely infinite simple. 
In particular, 
for any non-zero projections $e,f\in(\mathcal{A}_0\cap\pi(A)')\otimes M_k$, 
$e$ is Murray-von Neumann equivalent to a subprojection of $f$. 
Hence, for any $n\in\N$, 
we can apply (the proof of) Lemma \ref{FunLem} and obtain contractions\[
w_{i,n}\in(\mathcal{A}_0\cap\pi(A)')\otimes(M_n\oplus M_{n+1}),\quad i=0,1,
\]
such that 
\[
\left\lVert\sum_{i=0,1}w_{i,n}^*w_{i,n}
-1_A\otimes p\otimes(1_n\oplus 1_{n+1})\right\rVert\leq\frac{4}{n},\quad 
\left\lVert\sum_{i=0,1}w_{i,n}w_{i,n}^*
-1_A\otimes q\otimes(1_n\oplus 1_{n+1})\right\rVert\leq\frac{4}{n},
\]
and 
\[\dist(w_{i,n}^*w_{i,n}, \{1_A\otimes p\otimes x\mid x\in M_n\oplus M_{n+1}\})\leq \frac{2}{n}. 
\]
Since $M_n\oplus M_{n+1}$ can be unitally embedded in $B_1\cong \mathcal{O}_{\infty}$ and $n\in\N$ is arbitrary, there exist $v_i\in \cA\cap \pi(A)'$, $i=0, 1$ such that 
$\sum_{i=0,1} v_i^* v_i= 1_A\otimes p$, $\sum_{i=1,0} v_i v_i^*=1_A\otimes q$, and $v_i^*v_i \in (1_A\otimes p B_1)^{\omega}$, $i=0,1$. 
 
In the same way as the proof of Theorem \ref{ThmU}, one has 
\begin{align*}
a\otimes 1_{\mathcal{O}_\infty}&=(1_A\otimes s^*)\iota(a)(1_A\otimes s)\\
&=(1_A\otimes s^*)\left(\sum_{i=0,1} v_i\varphi(a) v_i^*\right)
(1_A\otimes s)\\
&=\sum_{i=0,1}\Ad(1_A\otimes s^*)\circ\Ad v_i
\circ\sigma\circ\rho(a)\quad{\rm in\ }\mathcal{A},
\end{align*}
for every $a\in A$. 
For $i=0,1$, we define an order zero completely positive contraction 
$\tilde\sigma_i:\mathcal{O}_2\to\mathcal{A}$ 
by $\tilde\sigma_i=\Ad(1_A\otimes s^*)\circ\Ad v_i\circ\sigma$. 

Let $F\subset A$ be a finite subset and let $\varepsilon>0$. 
By \cite[Theorem 7.4]{WZ10Adv}, 
the nuclear dimension of $\mathcal{O}_2$ is one. 
It follows that there exist completely positive contractions 
$\alpha:\mathcal{O}_2\to E_0\oplus E_1$ and $\beta_j:E_j\to \mathcal{O}_2$, $j=0, 1$, 
such that $E_j$ are finite dimensional $\mathrm{C}^*$-algebras, 
$\beta_j$ are order zero and 
\[
\left\lVert\left(\sum_{j=0,1}\beta_j\right)\circ\alpha(\rho(a))
-\rho(a)\right\rVert<\varepsilon
\] 
for any $a\in F$. 
Then $\tilde\sigma_i\circ\beta_j$ is an order zero map 
from $E_j$ to $\mathcal{A}$ for each $i,j=0,1$. Define a completely positive contraction $\tilde\rho :A\rightarrow E_0\oplus E_1\oplus E_0\oplus E_1$ by $\tilde\rho(a)=\alpha(\rho(a))\oplus\alpha(\rho(a))$. 
We have 
\[
\left\lVert\left(\sum_{i,j=0,1}\tilde\sigma_i\circ \beta_j\right)\circ\tilde\rho(a)-a\otimes 1_{\mathcal{O}_\infty}\right\rVert<\varepsilon
\] 
for any $a\in F$. 
Since the cone over $E_j$ is projective, 
$\tilde\sigma_i\circ\beta_j$ lifts to 
an order zero map to $\ell^\infty(\N,A\otimes \mathcal{O}_\infty)$. 
The proof is completed. 
\end{proof}

\end{document}